\documentclass{amsart}
\usepackage{amsmath}
\usepackage{graphicx}
\usepackage{amsfonts}
\usepackage{amssymb}
\setcounter{MaxMatrixCols}{30}
\newtheorem{theorem}{Theorem}
\theoremstyle{plain}

\newtheorem{example}{Example}

\newtheorem{proposition}{Proposition}
\newtheorem{remark}{Remark}

\numberwithin{equation}{section}

\begin{document}
\title{On the homology theory of the closed geodesic problem}
\author{Samson Saneblidze}
\address{A. Razmadze Mathematical Institute\\
Department of Geometry and Topology\\
M. Aleksidze st., 1\\
0193 Tbilisi, Georgia} \email{sane@rmi.ge}\subjclass[2000]{Primary 55P35, 53C22;Secondary 55U20} \keywords{Closed geodesics, Betti numbers, free loop space, filtered Hirsch model}
\date{}

\begin{abstract}
Let $\Lambda X$ be the free loop space on  a  simply connected finite $CW$-complex $X$
 and $\beta_{i}(\Lambda X;\Bbbk)$ be
 the cardinality of a minimal generating set of
$H^{i}(\Lambda X;\Bbbk)$ for $\Bbbk$ to be a commutative ring with unit. The sequence $
\beta_{i}(\Lambda X;\Bbbk) $  grows  unbounded if and only if $\tilde {H}^{\ast}(X;\Bbbk)$
requires  at least two algebra generators. This in particular answers to a long standing
problem whether  a   simply connected closed smooth manifold has infinitely many
geometrically distinct closed geodesics in any Riemannian metric.

\end{abstract}
\maketitle

\section{Introduction}

Let $Y$ be a topological space, let $\Bbbk$ be a commutative ring with unit, and
assume that the $i^{th}$-cohomology group $H^{i}(Y;\Bbbk)$ of $Y$ is finitely generated
as a $\Bbbk$-module. We refer to the cardinality of a minimal generating set of
$H^{i}(Y;\Bbbk),$ denoted by $\beta_{i}(Y;\Bbbk),$ as the \emph{generalized}
$i^{th}$\emph{-Betti number} \emph{of} $Y.$ Let $\Lambda X$ denote the free loop space,
i.e., all continuous maps from the circle $S^1$ into $X.$ In \cite{Gromoll-Meyer}
Gromoll and Meyer proved the following

\vspace{0.1in} \noindent{\bf Theorem.} \emph{Let $X$ be a simply connected closed
smooth  manifold of dimension greater than 1 and let $\Bbbk$ be a field of characteristic zero. If
 the Betti numbers  $\beta_{i}(\Lambda X;\Bbbk) $ grow unbounded, then $X$ has
infinitely many geometrically distinct closed geodesics in any Riemannian metric.}
\vspace{0.1in}

\noindent In fact,  the proof of the theorem easily shows that the statement  remains to be true for the Betti numbers $\beta_{i}(\Lambda
X;\Bbbk) $ with respect to any field $\Bbbk,$ too.
Thus,   this result has motivated  a question, the 'closed geodesic problem'\!, to find simple
criteria  which imply that  the Betti numbers $\beta_{i}(\Lambda X;\Bbbk) $ are
unbounded. Below we state such criteria in its most general form in the following
\begin{theorem}
\label{bettiF} Let $X$ be a simply connected space and $\Bbbk$ a commutative
ring with unit. If $H^{\ast}(X;\Bbbk)$ is finitely generated as a $\Bbbk$-module and
$H^{\ast}(\Lambda X;\Bbbk)$ has finite type, then the generalized Betti numbers
$\beta_{i}(\Lambda X;\Bbbk) $ grow unbounded if and only if $\tilde
{H}^{\ast}(X;\Bbbk)$ requires at least two algebra generators.
\end{theorem}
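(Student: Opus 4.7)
The plan is to build an algebraic model of $C^{*}(\Lambda X;\Bbbk)$ that is small enough to make both directions of the equivalence computable over an arbitrary ground ring $\Bbbk$. The essential tool is the \emph{filtered Hirsch model} $\rho : (RH,d)\to C^{*}(X;\Bbbk)$, in which $RH = R\otimes H^{*}(X;\Bbbk)$ is a free graded commutative algebra and $R$ is an auxiliary dg algebra encoding the obstructions to strict commutativity integrally, with $\rho$ a multiplicative quasi-isomorphism. Once this is in hand, a Sullivan-type construction yields a model $(RH\otimes s^{-1}\overline{RH},D)$ for $\Lambda X$ whose cohomology computes $H^{*}(\Lambda X;\Bbbk)$. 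The main theorem then becomes a statement about the minimal number of $\Bbbk$-module generators in each degree of the cohomology of this explicit complex.

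For the \emph{only-if} direction, I would suppose that $\tilde H^{*}(X;\Bbbk)$ is generated as a $\Bbbk$-algebra by a single class $x$. Then the filtered Hirsch model may be chosen with a single non-trivial generator, and the associated loop-space model degenerates to a two-generator complex (generated by $x$ and $sx$) whose cohomology has the familiar quasi-periodic structure modelled on $\Lambda S^{n}$ or $\Lambda\mathbb{CP}^{n}$. A spectral-sequence or direct filtration argument gives a uniform bound on $\beta_{i}(\Lambda X;\Bbbk)$.

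For the \emph{if} direction, assume $\tilde H^{*}(X;\Bbbk)$ requires two algebra generators $x,y$. Working in the loop-space model, I would exhibit an infinite family of cocycles $z_{k}$ built from $x$, $y$ and their desuspensions $sx,sy$ (an integral analogue of the Sullivan--Vigu\'e-Poirrier family) whose cohomology classes cannot be absorbed into a bounded number of minimal generators as the degree grows. A Nakayama-type localisation at each maximal ideal of $\Bbbk$ reduces the problem to the residue-field case, where the classical Sullivan--Vigu\'e-Poirrier theorem, combined with the compatibility of the filtered Hirsch model with base change, furnishes the required unboundedness; lifting the generators back to $\Bbbk$ is possible precisely because they are constructed to have leading terms that survive every reduction.

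The principal obstacle is the verification that the candidate cocycles $z_{k}$ remain \emph{minimal} generators of $H^{*}(\Lambda X;\Bbbk)$ rather than being absorbed by coboundaries coming from the higher Hirsch perturbation terms in $D$. This forces the detailed degree and filtration bookkeeping for which the filtered Hirsch model (as opposed to a naive Sullivan-style model over $\Bbbk$) is indispensable, and it is the step where the mild hypothesis that $H^{*}(\Lambda X;\Bbbk)$ is of finite type enters, ensuring that the minimal generating sets counted by $\beta_{i}(\Lambda X;\Bbbk)$ are finite and well-defined.
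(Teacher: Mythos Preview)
Your proposal has a genuine and central gap: the reduction step ``Nakayama-type localisation at each maximal ideal of $\Bbbk$ reduces the problem to the residue-field case, where the classical Sullivan--Vigu\'e-Poirrier theorem \ldots\ furnishes the required unboundedness'' is circular. The Sullivan--Vigu\'e-Poirrier theorem is proved only over fields of characteristic zero; the statement over fields of positive characteristic was an open conjecture and is precisely what Theorem~\ref{bettiF} establishes. If $\Bbbk=\mathbb{Z}_p$ (or any ring with a residue field of positive characteristic), your reduction lands you exactly in the case you cannot cite. There is no pre-existing result to lift back.

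There is also a structural misunderstanding of the model. The filtered Hirsch resolution $RH$ is not a free graded-commutative algebra of the form $R\otimes H^{*}(X;\Bbbk)$; it is the \emph{tensor algebra} $T(V^{*,*})$, genuinely non-commutative, equipped with a $\smile_1$-product (and higher $E_{p,q}$) that measures the failure of commutativity. The small model for $\Lambda X$ is $(RH\otimes \bar V_{\Bbbk},d_{\omega})$ with $\bar V_{\Bbbk}=s^{-1}V^{>0}_{\Bbbk}\oplus\Bbbk$, not $RH\otimes s^{-1}\overline{RH}$. The paper's two infinite sequences are built from iterated $\smile_1$-powers such as $x_0^{\smile_1(n+1)}$ of suitable generators or of elements $\varpi\in V$ associated to multiplicative relations in $H_{\Bbbk}$; showing these survive requires a delicate case analysis depending on the integer $\mu$ (the characteristic of $\Bbbk$), on Bockstein behaviour, on the $\smile_1$-height via the operation $\mathcal{P}_1$, and on Propositions controlling when elements are (weakly) homologous to zero. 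None of this machinery is replaceable by a base-change argument, because the integral filtered Hirsch model is where the torsion phenomena are actually resolved.
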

Theorem \ref{bettiF} was proved by Sullivan and Vigu\'{e}-Poirrier \cite{Sul-Vigue}
over fields of characteristic zero, and then it was conjectured for $\Bbbk$ to be a
field of positive characteristic.
  A number of papers
\cite{ziller}, \cite{Lsmith},  \cite{mcClearyGot}, \cite{mcClearyLNM}, \cite{mcCleary-ziller},
\cite{Roos},
     \cite{Halp-Vigue},   \cite{Ndombol-Thomas} deals with this conjecture
      but it remained to be open for
 $X$ to be a finite $CW$-complex and $\Bbbk$  a finite field.

Here we prove Theorem \ref{bettiF}. More precisely, it is a consequence of the following more general
algebraic fact:
 Let $A=\{ A^{i} \},i\in {\mathbb Z},$  be a torsion free graded abelian
group such that $A$ is an associative Hirsch algebra and the bar construction $BA$ is a Hirsch algebra \cite{saneFiltered}; this in particular
means that
 $A$ is an (associative) graded differential  algebra (dga) endowed with higher order  operations $E=\{E_{p,q}\}$ and $E'=\{E'_{p',q'}\}$
 such that $E$ induces  an associative  product $\mu_{E}$ on the bar construction $BA$ converting it into a dg Hopf algebra, and, similarly, $E'$
induces a product $\mu_{E'}$ (not necessarily associative) on the double bar construction $B^2A.$
(A major component of $E'$ is a binary product
$E'_{1,1}$ on $A$    measuring the non-commutativity of the operation $E_{1,1};$ note that
in the topological setting of $A$ such a binary product is just provided by Steenrod's  cochain $\smile_2$-product.)
Below the algebra $A$ is referred to as a \emph{special} Hirsch algebra.
Let  $A_{\Bbbk}=A\otimes_{\mathbb{Z}}\Bbbk.$

We have the following theorem whose
proof appears in Section \ref{theorem}:
\begin{theorem}
\label{varsigma} Assume that $H^{\ast}(A_{\Bbbk})$ is finitely generated as a $\Bbbk$-module
and that the Hochschild homology $HH_*(A_{\Bbbk})$ has finite type. Let $\varsigma_{i}(A_{\Bbbk})$
denote the cardinality of a minimal generating set of $HH_i(A_{\Bbbk}).$ Then the integers $
\varsigma_{i}(A_{\Bbbk})$ grow unbounded if and only if $\tilde{H}^*(A_{\Bbbk})$ requires at least two algebra
generators.
\end{theorem}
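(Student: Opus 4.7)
The plan is to transfer the computation to a filtered Hirsch model of $A_\Bbbk$ and then exploit the higher operations $E$ and $E'$ to control $HH_*$ over an arbitrary commutative ring. Concretely, I would first invoke the machinery of \cite{saneFiltered} to replace $A_\Bbbk$ by a special Hirsch algebra $M$ of the form $(H \otimes TV, d)$ with $H = H^*(A_\Bbbk)$ and $V$ built from transgressions of the relations among generators of $H$, together with a quasi-isomorphism $M \to A_\Bbbk$ of special Hirsch algebras. Homotopy invariance of Hochschild homology gives $HH_*(A_\Bbbk) \cong HH_*(M)$, and the bar and double bar constructions descend to $M$ carrying the products $\mu_E$ and $\mu_{E'}$ inherited from $A$. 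The aim of working with $M$ is that the cyclic bar complex on $H \otimes TV$ is small enough for direct manipulation, yet retains all of the Hirsch information needed when $\Bbbk$ has positive characteristic.

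With the model in hand I would settle the "only if" direction first. If $\tilde{H}^*(A_\Bbbk)$ has a single algebra generator $x$, then $H$ is a truncated polynomial algebra $\Bbbk[x]/(x^n)$ or an exterior algebra $\Lambda(x)$, and the filtered Hirsch model of $A_\Bbbk$ collapses to an extremely small explicit complex. Computing $HH_*$ on this complex — in essentially the same way as the calculations of \cite{ziller} and \cite{Roos} — shows that $\varsigma_i(A_\Bbbk)$ is bounded by a constant depending only on $n$ and $|x|$, giving the desired boundedness.

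The main obstacle is the converse. Assume $\tilde{H}^*(A_\Bbbk)$ has two algebra generators $x$ and $y$. In the model $M$ I would construct an infinite family of Hochschild cycles $\{\zeta_n\}$ of unbounded degree built from iterated products and Hirsch brackets of $x$ and $y$, using the binary operation $E'_{1,1}$ (Steenrod's $\smile_2$-cochain in the topological setting) to supply the correction terms that make $\zeta_n$ a cycle in positive characteristic, where a naive shuffle construction fails to close. The hardest step is then to verify that the classes $[\zeta_n]$ contribute an unbounded number of minimal generators to $HH_*(M)$. My strategy is to filter $M$ so that on the associated graded, each $\zeta_n$ reduces to the classical Sullivan–Vigu\'e cycle in the free graded-commutative algebra on $x$ and $y$, where unboundedness of the Betti numbers is already known; combined with the finite-type hypothesis on $HH_*(A_\Bbbk)$ — which forces an unbounded total rank to spread across infinitely many degrees — this yields $\varsigma_i(A_\Bbbk) \to \infty$. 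The subtle point is that the reduction to the free two-generator case must be carried out so as to preserve the product $\mu_{E'}$ on the double bar construction; this is precisely where the hypothesis that $A$ be a \emph{special} (and not merely ordinary) Hirsch algebra is essential, and accounts for why the result survives passage to finite fields where the Sullivan–Vigu\'e argument itself breaks down.
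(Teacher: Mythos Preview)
Your ``only if'' direction is essentially what the paper does (it cites \cite{Halp-Vigue} for $\varsigma_i = 2$ in the one-generator case), so that part is fine. The ``if'' direction, however, has a genuine gap. Your plan is to construct cycles $\zeta_n$ and argue via a filtration that on the associated graded they reduce to Sullivan--Vigu\'e cycles ``in the free graded-commutative algebra on $x$ and $y$, where unboundedness of the Betti numbers is already known.'' But the Sullivan--Vigu\'e theorem is a rational result; over a field of positive characteristic it is precisely the statement you are trying to prove, not something you can invoke. Even if for a \emph{free} commutative algebra on two generators $HH_*$ is directly computable over any $\Bbbk$, you still owe the argument that your classes survive the passage from the associated graded to the actual $HH_*(M)$ --- that is, that no higher differential in the resulting spectral sequence kills them. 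That survival is the entire difficulty, and your proposal does not address it; the vague appeal to $\mu_{E'}$ ``preserving the reduction'' does not constitute such an argument.

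The paper's route is quite different and far more explicit. It passes to a small model $(RH\otimes \bar V_\Bbbk, d_\omega)$ for the Hochschild complex, equipped with a concrete product, and constructs two sequences $\mathbf{x}_\mu,\mathbf{y}_\mu$ of cycles whose pairwise products are shown to be linearly independent directly. The first sequence comes from an odd-degree generator $x$ via iterated $\smile_1$-powers, $x(n)=\chi_1(x_0^{\smile_1(n+1)})$; the second does \emph{not} in general come from a second algebra generator but from an odd-dimensional element $\varpi\in V$ associated to a minimal multiplicative \emph{relation} $P(y_1,\dots,y_q)=0$ in $H_\Bbbk$, and finding $\varpi$ requires a lengthy case analysis involving the Bockstein, the operation $\mathcal{P}_1$, and Kraines' formula --- your assumption that one can simply work with two generators $x,y$ is too optimistic. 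Nonvanishing is proved by structural criteria inside the resolution (elements not being weakly or $\lambda$-homologous to zero), never by comparison with the rational case. Finally, the special Hirsch structure enters more narrowly than you suggest: the $\smile_2$ product is used to arrange $h^{tr}(a\cup_2 b)=0$ for $a\neq b$, which is what permits the passage to the small resolution $R_\tau H$ where the Hirsch formulas hold and $\smile_1$ is commutative on $V_\tau$; it is not used as a correction term to close individual cycles.
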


Let $C^{\ast}(X;\Bbbk)=C^{\ast}(\operatorname{Sing}^{1}X;\Bbbk)/C^{>0}
(\operatorname{Sing}\,x\,;\Bbbk)$ in which ${\operatorname{Sing}}^{1}
X\subset{\operatorname{Sing}}X$ is the Eilenberg 1-subcomplex generated by the singular
simplices that send the 1-skeleton of the standard $n$-simplex $\Delta^{n}$ to the base
point $x$ of $X.$
 Theorem \ref{bettiF} is deduced from Theorem \ref{varsigma} by setting
$A_{\Bbbk}=C^{\ast}(X;\Bbbk)$: Indeed, $C^{\ast}(X;\Bbbk)$ is a special Hirsch algebra (\cite{baues}, \cite{KScubi})
  and there is  the isomorphism $HH_*(C^*(X;\Bbbk))\approx H^*(\Lambda X;\Bbbk)$ (\cite{Jones}, \cite{saneFREE}).
 When $\tilde {H}^*(A_{\Bbbk})$ requires  at least two algebra
generators, we construct two infinite sequences in the Hochschild homology $HH_*(A_{\Bbbk})$
and take all possible products of their components to detect a submodule of $HH_*(A_{\Bbbk})$
at least as large as the polynomial algebra $\Bbbk[x,y].$ The construction  of these
sequences is in fact  based  on the notion
of a formal $\infty$-implication sequence  \cite{saneBetti}  that  generalizes W. Browder's
notion of $\infty$-implications \cite{browder}. As in \cite{saneBetti},  we use a filtered Hirsch model of $A_{\Bbbk}$ this time
 to construct a small model for the Hochschild chain complex of $A_{\Bbbk}$ and then to reduce the chain product \cite{saneFREE} on this model
  inducing
  the aforementioned product on  $HH_*(A_{\Bbbk}).$
While the constructions of the sequences  in our both papers   are similar,  here is an essential exception  that  we have to detect  a desired sequence   in the kernel of the canonical
homomorphism
$HH_*(A_{\Bbbk})\rightarrow H^*(BA_{\Bbbk})$ (corresponding to $i^*:H^*(\Lambda X;\Bbbk)\rightarrow H^*(\Omega X;\Bbbk)$ for $i:
\Omega X \hookrightarrow \Lambda X $);
also  some technical details are simplified.

Though the author  has been  considered several special cases
of Theorem \ref{bettiF} during the last two decades  but it was just recently
the integer coefficients come into play: In particular,  the filtered Hirsch model over the integers controls  the subtleties when dealing with the Bockstein homomorphism
in question.

I am grateful to Edward Fadell for discussing about the subject and in particular for
pointing out the paper \cite{Sul-Vigue} when the author was visiting the Heidelberg
University at the beginning of 90's.

\section{Some preliminaries and conventions}

We adopt some basic  notations and terminology of \cite{saneFiltered}. We fix a ground commutative
ring $\Bbbk$ with unit and
let $\mu>0$ denote the smallest integer such that
$\mu\kappa=0$ for all $\kappa\in\Bbbk.$ When such a positive integer does not exist, we assume $\mu=0.$
Let $A^*=\tilde{A}\oplus \Bbbk$ be a supplemented dga. In general $A^*$ may be graded over the integers  ${\mathbb Z}.$
Assuming  $A$ to be associative,
the (reduced)
bar construction $BA$  is the tensor coalgebra $T(\bar A),\ \bar A=
s^{-1}\tilde A,$ with differential $d_{BA}=d_{1} +d_{2} $ given for $[\bar a_{1}|\dotsb|\bar
a_{n}] \in T^{n}(\bar A)$ by
\begin{equation*}
d_{1}[\bar a_{1}|\dotsb|\bar a_{n}]=-\sum_{1\leq i\leq n} (-1)^{\epsilon^a_{i-1}}[\bar
a_{1}|\dotsb|\overline{d_{A}(a_{i})}|\dotsb|\bar a_{n}]
\end{equation*}
and
\begin{equation*}
d_{2} [\bar a_{1}|\dotsb|\bar a_{n}]=- \sum_{1\leq i<n} (-1)^{\epsilon^a_{i}}[\bar
a_{1}|\dotsb|\overline{a_{i}a_{i+1}}|\dotsb|\bar a_{n}],
\end{equation*}
where $\epsilon^x_{i}=|x_{1}|+\cdots +|{x_{i}}|+i.$
An associative  dga $A$ equipped with multilinear maps $E=\{E_{p,q}\}_{p+q>0},$
\[
E_{p,q}:A^{\otimes p}\otimes A^{\otimes q}\to A, \ \ \ \  \ \ \  E_{1,0}=Id=
E_{0,1},\ E_{p>1,0}= 0=E_{0,q>1},
\]
of degree $1-p-q$ is a \emph{Hirsch} algebra if
$E$ lifts to a dg coalgebra map $\mu_{E}:BA\otimes BA \rightarrow BA.$ A basic ingredient of $E$ is the binary operation $\smile_1:=E_{1,1}$
measuring the non-commutativity of the $\cdot$ product on $A$ by the formula
\[
d(a\smile_1 b)- da\smile_1 b   +(-1)^{|a|} a\smile_1 db= (-1)^{|a|}ab -(-1)^{|a|(|b|+1)}ba.
\]
Given a Hirsch algebra $(A, \{E_{p,q}\})$ with  $H=H(A),$
there is its  filtered Hirsch model
\begin{equation}\label{fhmodel}
f:(RH,d_{h})\rightarrow(A,d_{A})
\end{equation}
in which $\rho: (RH,d)\rightarrow H$ is a multiplicative resolution of the commutative graded algebra (cga)  $H:$
As a module  each row of $R^*H^*$ for $m\in {\mathbb Z}$
           \[\cdots \overset{d}{\rightarrow}R^{-2}H^m\overset{d}{\rightarrow} R^{-1}H^m \overset{d}{\rightarrow}  R^0H^m\overset{\rho}{\rightarrow} H^m\]
  represents  a  free  resolution of a $\Bbbk$-module $H^m.$
As an algebra
 $R^*H^*=T(V^{*,*})$ is a (bi)graded tensor algebra with
 \begin{equation*}
V^{*,*}= \mathcal{E}^{*,*}\oplus U^{*,*}= \mathcal{E}^{*,*} \oplus \mathcal{T}^{*,*}\oplus {\mathcal M}^{*,*};
\end{equation*}
 the module
  $V^{0,*} ={\mathcal M}^{0,*}$ corresponds to a choice of  multiplicative generators of $H,$ while  ${\mathcal M}^{-1,*}$
  to   relations among them  which is not a consequence of that of the commutativity of the algebra $H,$
  and then ${\mathcal M}^{-r,*}$  for  $r\geq 2$ is defined by the syzygies.
  The module
 ${\mathcal E}^{<0,*}$ just corresponds to the commutativity relation in $H$ and is formed by the products under all operations
 $E_{p,q}$ on $RH.$
 In particular, \[V^{-1,*}=\mathcal{E}^{-1,*}\oplus {\mathcal M}^{-1,*}\]
 where $\mathcal{E}^{-1,*}$ is formed by the products $a\smile_1 b$ for $a,b\in R^0H^*,$ while ${\mathcal M}^{-1,*}\neq 0$
  for $H$ to be a non-free cga (e.g.
    $\dim H^* <\infty$
  and $H^{ev}\neq 0;$ see (\ref{relation})--(\ref{power}) below).
 The module ${\mathcal T}$ is determined  by the $\cup_2$-product that measures the non-commutativity of the $\smile_1$-product, so that its first non-trivial component ${\mathcal T}^{-2,*}$ contains the products $a\cup_2 b$ for $a,b\in V^{0,*}.$ More precisely,
   $(RH,d)$
  is also endowed with Steenrod's type binary operation, denoted by $\smile_2,$
 so that the (minimal) Hirsch resolution $(RH,d)$ can be viewed as  a special Hirsch algebra
with $E=\{E_{p,q}\}$ and $E'$ consisting of a  single operation  $\smile_2:=E'_{1,1}.$ In particular,   the relationship between $a\cup_2b$ and $a\smile_2b$ for
$a,b\in {\mathcal V}$ with $da,db\in V,\,$ where  ${\mathcal V}$ is a basis of $V,$
is given by
\[
a\smile_2b=\left\{\begin{array}{llll}
a\cup_2b, & a\neq b,    \\
2 a \cup_2a,& a=b ;
\end{array}
\right.
\]
thus  $d(a\cup_2 a)=a\smile_1a $ for $a$ to be of even degree. ($a\cup_2a=0$ for an odd dimensional $a\in RH.$)
Regarding the differential $d_h$ on $RH,$ we have
\[
d_{h}=d+h,\ \ \ h=h^{2}+\cdots+h^{r}+\cdots,\ \ \ h^{r}:R^{p}H^{q}\rightarrow R^{p+r}H^{q-r+1}.
\]
 Given $r\geq 2,$ the map $h^r|_{R^{-r}H}:R^{-r}H\rightarrow R^0H$ is referred to as the \emph{transgressive} component of
$h$ and is denoted by $h^{tr}.$ The perturbation $h$ is extended as a derivation on ${\mathcal E}$ so that $h^{tr}({\mathcal E})=0.$

 Furthermore, if $A$ is also a special Hirsch algebra in (\ref{fhmodel}), we can simply choose $h$ and $f$ such that
 \begin{equation}\label{cup2}
h^{tr}(a\cup_2b)=0  \ \  \text{for}\ \  a\neq b \ \  \text{in}\ \  {\mathcal V}.
\end{equation}
Just to achieve   this equality in $(RH,d_h),$  we have in fact evoked  the product  $\mu_{E'}$ on $B^2A$ (cf. \cite[Proposition 4]{saneFiltered}).

A Hirsch resolution $(RH,d)$ is \emph{minimal} if
\[
d(u)\in \mathcal{E}+\mathcal{D}+\kappa_u\!\cdot\!V\ \   \text{for}\ \ u\in U
\]
where ${\mathcal{D}^{\ast,\ast}}\subset R^{\ast}H^{\ast}$ denotes the submodule of
decomposables $RH^{+}\!\cdot RH^{+}$ and $\kappa_u\in\Bbbk$ is non-invertible; for
example, $\kappa_u\in {\mathbb Z}\setminus \{-1,1\}$ when $\Bbbk=\mathbb{Z}$ and
$\kappa_u=0$ for all $u$ when $\Bbbk$ is a field.

 In the sequel $A$ denotes
 a  torsion free special Hirsch ${\mathbb Z}$-algebra, while $A_{\Bbbk}=A \otimes_{\mathbb{Z}}\Bbbk$ and $H_{\Bbbk}=H(A_{\Bbbk}).$ Assume $(RH,d)$ is minimal and
let $RH_{\Bbbk }=RH\otimes_{\mathbb{Z}}\Bbbk;$ in particular, $RH_{\Bbbk}=T(V_{\Bbbk})$
for $V_{\Bbbk}=V\otimes_{\mathbb{Z}}\Bbbk.$ When $\Bbbk$ is a field of characteristic
zero, $H_{\Bbbk}=H\otimes \Bbbk$
and  $\rho_{\Bbbk}=\rho\otimes1:RH_{\Bbbk}\rightarrow H_{\Bbbk}$ is a Hirsch
resolution of $H_{\Bbbk},$ which is \emph{not }minimal when $\operatorname{Tor}H\neq0$.
Assuming $A$ is   ${\mathbb Z}$-algebra in (\ref{fhmodel})
   we obtain  a  Hirsch model of  $(A_{\Bbbk},d_{A_{\Bbbk}})$
 as
\[
f_{\Bbbk}=f\otimes1:(RH_{\Bbbk},d_{h}\otimes1)\rightarrow (A_{\Bbbk},d_{A_{\Bbbk}}).
\]

\subsection{Small Hirsch resolution}\label{small}
In practice it is convenient  to reduce the Hirsch resolution $RH$ at the cost of the module ${\mathcal E}.$ Here we
define such a small resolution $R_{\tau}H$ (compare with $R_{\varsigma}H$ in \cite{saneFiltered}).
Namely,
 let
  \[R_{\tau}H=RH/ J_{\tau} \]
 where $J_{\tau}\subset RH$ is a Hirsch ideal generated by
   \begin{multline*}
   \hspace{-0.1in}\{ E_{p,q}(a_1,...,a_p;a_{p+1},...,a_{p+q}),\,
   dE_{1,2}(a_1;a_2, a_3),\,
   dE_{2,1}(a_1,a_2; a_3),\, a\cup_2 b,\,d(a\cup_2b) \\
   |\,  (p,q)\neq(1,1),\,
     a,b\in {\mathcal V}, \, a\neq b\}
   \end{multline*}
   where   $a_i\in RH$ unless $i=p+q$ for  $p\geq 2$ and $q=1$ in which case $a_{p+1}\in {\mathcal V}.$
   Since $d: J_{\tau}\rightarrow  J_{\tau},$ we get a Hirsch algebra surjection  $g_{\tau}:(RH,d){\rightarrow} (R_{\tau}H,d)$  so  that
    a resolution map $\rho: RH \rightarrow H$   factors  as
   \[\rho: (RH,d)\overset{g_{\tau}}{\longrightarrow} (R_{\tau}H,d) \overset{\rho_{\tau}}{\longrightarrow} H.\]
    By definition we have $h:{\mathcal E}\rightarrow {\mathcal E};$ this fact together with (\ref{cup2})
     implies
 $h: J_{\tau}\rightarrow  J_{\tau}.$  Thus
   $g_{\tau}$  extends to  a quasi-isomorphism  of Hirsch algebras
 \begin{equation*}
 g_{\tau}:(RH,d_h)\rightarrow (R_{\tau}H,d_h).
 \end{equation*}

We have that the  Hirsch algebra structure of   $(R_{\tau}H,d_h)$ is given by the $\smile_1$-product satisfying the following two  formulas.
The (left) Hirsch formula: For $a,b,c\in R_{\tau}H,$
\begin{equation} \label{hirsch1}
c\smile _{1}ab=(c\smile _{1}a)b+(-1)^{(|c|+1)|a|}a(c\smile _{1}b)
\end{equation}
\noindent and the (right) generalized Hirsch formula: For
$a,b\in R_{\tau}H$  and $c\in V_{\tau}$
with
$d_h(c)=\sum c_1\cdots c_q,\,c_i\in V_{\tau},$
\begin{equation}
\label{hirsch2} ab\smile_1 c=\left\{\!\!\begin{array}{llll} a(b\smile_1 c)+
(-1)^{\varepsilon_1}
(a\smile_1 c)b, &  q=1,\vspace{5mm}\\
 a(b\smile_1 c)+(-1)^{\varepsilon_1}(a\smile_1 c)b\vspace{1mm}\\
\hspace{0.62in} +\underset{1\leq i<j\leq q}{\sum}(-1)^{\varepsilon_2}\,c_1\cdots c_{i-1}(a\smile_1 c_i)c_{i+1}\vspace{1mm}\\
\hspace{1.5in}\cdots c_{j-1}(b\smile_1 c_{j})c_{j+1}\cdots c_q, & q\geq 2, \\
\end{array}
\right.
\end{equation}
\hspace{1.0in}$\varepsilon_1=|b|(|c|+1),\, \varepsilon_2=(|a|+1)(\epsilon^c_{i-1}+i+1)+(|b|+1)(\epsilon^c_{j-1}+j+1).$
\begin{remark}
1. Formula $(\ref{hirsch2})$ can be thought of as a generalization of Adams' formula for
the $\smile_1$-product in the cobar construction \cite[p.\,36]{adams-hopf1} from $q=2$ to any $q\geq 2.$

2. We just pass from $RH$ to $R_{\tau}H$ to have formulas $(\ref{hirsch1})$--$(\ref{hirsch2})$ therein; more precisely, we use them together with the commutativity
of $a\smile_1b$ for $a,b\in V_{\tau}$  to build  the sequence given by $(\ref{even})$ below.
\end{remark}

\subsection{Cohomology operation $\mathcal{P}_1$}

Let $\mu\geq 2.$ Given an element $a\in A_{\Bbbk}$ and the integer $n\geq 2,$ take (the right most) $n^{th}$-power
 of $\bar{a}\in \bar{A}_{\Bbbk}\subset BA_{\Bbbk}$ under the $\mu_E$ product on $BA_{\Bbbk}$
  and then consider its component in $\bar{A}_{\Bbbk}.$ Denote this component  by
  $s^{-1}(a^{\uplus n}) $ for $a^{\uplus n}\in A_{\Bbbk}.$ The element  $a^{\uplus n}$  has the form
\[
a^{\uplus n}=a^{\smile_1n }+ Q_n(a),
\]
where   $Q_n(a)$  is expressed  in terms of $E_{1,k}$ for $1<k <n$ so that
 $Q_2(a)=0,$ i.e., $a^{\uplus 2}=a^{\smile_1 2}.$
    In particular, if $E_{1,k}=0$ for $k\geq 2$ (e.g.   $A_{\Bbbk}$ is a homotopy Gerstenhaber algebra (HGA)), then $a^{\uplus n}=a^{\smile_1 n }.$

Let $p\geq 2$ be  the smallest prime that divides $\mu.$
Define the cohomology operation
$\mathcal{P}_1$ on $H_{\Bbbk}$  as follows.

For $p$ odd:
\[\hspace{-0.23in} \mathcal{P}_1: H_{\Bbbk}^{2m+1}\rightarrow H^{2mp+1}_{\Bbbk},\ \ \ [a] \rightarrow
\left[\frac{\mu}{p}a^{\uplus p}\right],  \]

 for $p=2:$
 \[\hspace{1.1in}\mathcal{P}_1: H_{\Bbbk}^{m}\rightarrow H^{2m-1}_{\Bbbk},\hspace{0.4in} [a]\rightarrow
\left[\frac{\mu}{2}a\smile_1\! a\right],\ \ \ \, da=0,\    a\in \!A_{\Bbbk}. \]
Since $a\smile_1a$ is a cocycle in $A_{\Bbbk}$ for an even dimensional cocycle $a$ independently on the parity of $p,$
we also set $\mathcal{P}_1[a]=[a\smile_1a]$ for $p$ odd and $\mu\geq 0;$
 obviously,  $\mathcal{P}_1[a]=0.$
Let
 $\mathcal{P}^{(m)}_1$  denote  the $m$-fold composition
        $\mathcal{P}_1\circ \cdots\circ \mathcal{P}_1.$
        Given  $x\in H_{\Bbbk},$ let
    $\nu\geq 0$ be the smallest integer such that
    $\mathcal{P}^{(\nu+1)}_1( x)=0.$ The integer
    $\nu$ is referred to as
\emph{$\smile_1$-height} of $x.$

\section{small model for the Hochschild chain complex}

Given an associative   dga $C,$
    its  (normalized) \emph{Hochschild chain complex}  $\Lambda C$
is  $C\otimes  {B}C$ with  differential $d_{{\Lambda C}}$ defined by $d_{{\Lambda C}}=
d_C\otimes 1 + 1 \otimes d_{ BC}+ \theta ^1+\theta ^2 ,$ where
\begin{equation*}
\begin{array}{lll}
\theta ^1(u\otimes [\bar a_1|\dotsb |\bar a_n]) = -(-1)^{|u|}ua_1\otimes [\bar
a_2|\dotsb |\bar a_n],\newline $\vspace{1mm}$ \\

\theta ^2 (u\otimes [\bar a_1|\dotsb |\bar a_n]) =(-1)^{(| a_n|+1)
(|u|+\epsilon^a_{n-1})} a_n u\otimes [\bar a_1|\dotsb |\bar a_{n-1}].
\end{array}
 \end{equation*}
The homology of  $\Lambda C$ is called the Hochschild homology of  $C$ and is
denoted by $HH_*(C).$

Let $C=T(V_{\Bbbk})$ be a tensor algebra with $V^*_{\Bbbk}$ a free $\Bbbk$-module.
 Denote \[\bar{V}_{\Bbbk}=s^{-1}(V_{\Bbbk}^{>0})\oplus\Bbbk. \]
 Then $\Lambda C$ can be replaced by the small complex
$(C\otimes\bar{V}_{\Bbbk},d_{\omega})$
 where the differential $d_{\omega}$ is defined as follows (cf. \cite{Vigue}, \cite{J-M.hoh}):
\begin{multline*}   d_{\omega}(u\otimes \bar a)=
d_{_C}(u)\otimes \bar a-
(-1)^{|u|}(1\otimes s^{-1})\chi(u\otimes d_{_C}(a))\\ -(-1)^{|u|+|a|}(ua-(-1)^{|a||u|}au)\otimes 1,
\end{multline*}
 in which \[\chi:C\otimes C\rightarrow C\otimes V_{\Bbbk}  \]
  is a map given for $u\otimes a\in C\otimes C$ with  $a=a_1\cdots a_n,\,a_i\in V_{\Bbbk},$     by
\[
\chi(u\otimes a)=\left\{
\begin{array}{llll}
0,&&  a= 1,\\

u\otimes a, && n=1,\vspace{1mm}\\

\underset{1\leq i\leq n}{\sum}(-1)^{\varepsilon} a_{i+1}\cdots a_{n}\, u\, a _1\cdots a_{i-1}\otimes a_i,& &    n\geq 2,
\end{array}
\right.
\]
$\hspace{2.2in} \varepsilon=(|a_{i+1}|+\cdots +|a_n|)(|u|+|a_1|+\cdots +|a_i|).$

There is a chain map
\begin{equation}\label{phi}
\phi:(\Lambda C,d_{\Lambda C})\rightarrow (C\otimes\bar{V}_{\Bbbk},d_{\omega})
\end{equation}
defined for    $u\otimes x\in \Lambda C$   by
\[\phi(u\otimes x)=\left\{
\begin{array}{llll}
u\otimes 1,  && x= [\  ],\\

(-1)^{|u|}(1\otimes s^{-1})\chi(u\otimes  a), && x =[\bar a],\\

0, &&  x= [\bar a_1|\cdots |\bar a_n],& n\geq 2,
\end{array}
\right.
\]
and $\phi$ is a homology isomorphism.

  For $C=RH_{\Bbbk},$ define
the differential $\bar{d}_{h}$ on $\bar{V}_{\Bbbk}$ by the restriction of $d_{h}$ to
$V_{\Bbbk}$ to obtain the cochain complex $(\bar{V}_{\Bbbk },\bar{d}_{h}).$ There are the sequences of maps
   \[(\bar V_{\tau})_{\Bbbk} \overset{\psi}{\longleftarrow}   B (R_{\tau}H_{\Bbbk}) \overset{B (g_{\tau})_{\Bbbk}}{\longleftarrow}
   B (RH_{\Bbbk})\overset{\!\!B f_{\Bbbk}}{\longrightarrow}    B A _{\Bbbk} \]
and
\[ R_{\tau}H\otimes (\bar V_{\tau})_{\Bbbk} \overset{\phi}{\longleftarrow}   \Lambda (R_{\tau}H_{\Bbbk}) \overset{ \Lambda  (g_{\tau})_{\Bbbk}}{\longleftarrow}
    \Lambda  (RH_{\Bbbk})\overset{ \!\!\Lambda  f_{\Bbbk}}{\longrightarrow}     \Lambda  A _{\Bbbk} \]
subjected to the following proposition
\begin{proposition}
\label{barV}
There are isomorphisms of $\Bbbk$-modules
\begin{multline*}
H^{*}((\bar V_{\tau})_{\Bbbk},\bar{d}_{h})\overset{\psi^*}{\underset{\approx}{\longleftarrow}} H^{*}(B(R_{\tau}H_{\Bbbk}),d_{_{B(R_{\tau}H_{\Bbbk })}})
\overset{B(g_{\tau})_{\Bbbk}^*}{\underset{\approx}{\longleftarrow}}
H^{*}(B(RH_{\Bbbk}),d_{_{B(RH_{\Bbbk })}})
\\
\overset{\!\!B f^*_{\Bbbk}}
{\underset{\approx}{\longrightarrow}}
H^{*}(BA_{\Bbbk},d_{_{BA_{\Bbbk}}})
\end{multline*}
and
\begin{multline*}
H^{*}(R_{\tau}H\otimes (\bar V_{\tau})_{\Bbbk},{d}_{\omega})
\overset{\phi^*}{\underset{\approx}{\longleftarrow}}
 H^{*}(\Lambda( R_{\tau}H_{\Bbbk}),d_{_{\Lambda (R_{\tau}H_{\Bbbk})}})
\overset{\Lambda (g_{\tau})_{\Bbbk}^*}{\underset{\approx}{\longleftarrow}}
H^{*}(\Lambda( RH_{\Bbbk}),d_{_{\Lambda (RH_{\Bbbk})}})\\
\overset{\!\!\Lambda f_{\Bbbk}^*}{\underset{\approx}{\longrightarrow}}
 H^{*}(\Lambda A,d_{_{\Lambda A}}).
\end{multline*}
\end{proposition}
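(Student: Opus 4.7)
The plan is to verify each of the six arrows separately, treating both chains in parallel. The arrows split naturally into two families: four \emph{comparison} arrows $Bf_\Bbbk^*$, $B(g_\tau)_\Bbbk^*$, $\Lambda f_\Bbbk^*$, $\Lambda(g_\tau)_\Bbbk^*$, each functorially induced by a dga quasi-isomorphism, and two \emph{small-model} arrows $\psi^*$ and $\phi^*$ that exploit the tensor-algebra structure of $R_\tau H$.

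For the comparison arrows, I would use that both $f_\Bbbk : (RH_\Bbbk, d_h\otimes 1) \to A_\Bbbk$ and $(g_\tau)_\Bbbk : (RH_\Bbbk, d_h) \to (R_\tau H_\Bbbk, d_h)$ are quasi-isomorphisms of dgas: the first by the defining property of the filtered Hirsch model (\ref{fhmodel}), the second as already recorded in Section~\ref{small}. Since all the relevant underlying $\Bbbk$-modules are free --- either from the torsion-freeness of $A$ or from the free tensor-algebra structure of $RH$ and $R_\tau H$ --- I would endow each bar (resp.\ Hochschild) complex with the word-length filtration and apply a standard spectral-sequence comparison: the $E_1$-page in each case is the bar (resp.\ Hochschild) complex of the corresponding homology dga, so the induced map on $E_1$-pages is a K\"unneth-type isomorphism, and convergence yields isomorphism on the abutments. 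This handles all four comparison arrows simultaneously.

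For the small-model arrows, $\phi^*$ is immediate from the statement recalled in the excerpt that $\phi : \Lambda C \to C \otimes \bar V_\Bbbk$ is a homology isomorphism for any dga $C = T(V_\Bbbk)$ on a free $\Bbbk$-module; I would apply this to $C = R_\tau H_\Bbbk$, noting that $R_\tau H$ is a free tensor algebra on the quotient module $V_\tau$. Indeed, each generator of the Hirsch ideal $J_\tau$ lying in $V$ (the $E_{p,q}(\cdots)$ with $(p,q)\neq(1,1)$ and the $a\cup_2 b$) is itself a tensor generator of $RH=T(V)$, not a decomposable, so quotienting by $J_\tau$ merely deletes generators and preserves the free-tensor-algebra form. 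The map $\psi^*$ is the bar-construction counterpart: for $C = T(W)$ a free tensor algebra with any differential, the projection $BC \to s^{-1}(W^{>0}) \oplus \Bbbk$ onto length-one tensors is a classical quasi-isomorphism, proved by a contracting homotopy along the word-length filtration; specializing to $W = V_\tau$ and equipping the target with the induced derivation $\bar d_h$ gives $\psi^*$.

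The main obstacle I anticipate is the identification $R_\tau H \cong T(V_\tau)$ needed for the small-model step, together with the compatibility of the perturbation $h$ with this identification. This requires verifying that $J_\tau$ is simultaneously a Hirsch ideal and a tensor-algebra ideal generated by elements of $V$ (not by decomposables), and that $h$ preserves $J_\tau$ --- facts that rest on (\ref{cup2}) and on the argument already given in Section~\ref{small} showing $h : J_\tau \to J_\tau$. Once this bookkeeping is in place, the two families of isomorphisms combine to yield the full statement of the proposition.
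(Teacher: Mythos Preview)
Your proposal is correct and aligns with the paper's treatment. The paper itself supplies no proof beyond the single remark following the proposition that $\psi^*$ ``is a consequence of a general fact about tensor algebras'' (citing Felix--Halperin--Thomas), leaving the other five arrows as implicitly standard; your outline simply makes explicit what the paper takes for granted --- functoriality of $B$ and $\Lambda$ under dga quasi-isomorphisms for the four comparison arrows, and the tensor-algebra small-model arguments for $\psi^*$ and $\phi^*$ --- and correctly flags the one nontrivial bookkeeping point, namely that $R_\tau H$ is again a free tensor algebra $T(V_\tau)$ (a fact the paper uses tacitly, e.g.\ in writing $c\in V_\tau$ in formula~(\ref{hirsch2})).
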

Note that the isomorphism $\psi^*$ above  is a consequence of a general fact about tensor algebras \cite{F-H-T}.
Recall  also the following isomorphisms
  $H^{\ast}(BC^{\ast }(X;\Bbbk),d_{_{BC}})\approx H^{\ast}(\Omega X;\Bbbk)$ (\cite{baues}) and
 $H^{*}(\Lambda C^{*}(X;\Bbbk),d_{_{\Lambda C}}) \approx H^{*}(\Lambda X;\Bbbk)$ (\cite{Jones}, \cite{saneFREE}) to deduce the following  proposition for
  $A_{\Bbbk}=C^{\ast}(X;\Bbbk).$
\begin{proposition}
\label{barVX} There are isomorphisms of $\Bbbk$-modules
\[
H^{*}((\bar V_{\tau})_{\Bbbk},\bar{d}_{h})\approx H^{*}(BC^{*}(X;\Bbbk),d_{_{BC}}) \approx
H^{*}(\Omega X;\Bbbk)\] and
\[
H^{*}(R_{\tau}H\otimes (\bar V_{\tau})_{\Bbbk},{d_{\omega}})\approx H^{*}(\Lambda
C^{*}(X;\Bbbk),d_{_{\Lambda C}}) \approx H^{*}(\Lambda X;\Bbbk).
\]
\end{proposition}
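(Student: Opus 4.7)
The plan is to obtain Proposition \ref{barVX} as a direct specialization of Proposition \ref{barV} to $A_{\Bbbk}=C^{*}(X;\Bbbk)$, composed with the two classical identifications $H^{*}(BC^{*}(X;\Bbbk))\approx H^{*}(\Omega X;\Bbbk)$ and $H^{*}(\Lambda C^{*}(X;\Bbbk))\approx H^{*}(\Lambda X;\Bbbk)$ recalled immediately before the statement.

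First I would verify that the hypotheses of Proposition \ref{barV} apply to $A_{\Bbbk}=C^{*}(X;\Bbbk)$. By \cite{baues} and \cite{KScubi}, the normalized cochain algebra built from the Eilenberg $1$-subcomplex (which makes sense because $X$ is simply connected) is a special Hirsch algebra; indeed it carries a homotopy Gerstenhaber structure. It therefore admits a filtered Hirsch model $f_{\Bbbk}\colon (RH_{\Bbbk},d_{h})\rightarrow(C^{*}(X;\Bbbk),d)$ and a small Hirsch resolution $(R_{\tau}H_{\Bbbk},d_{h})$ as constructed in Section \ref{small}. Proposition \ref{barV} then immediately supplies the $\Bbbk$-module isomorphisms
\[
H^{*}((\bar V_{\tau})_{\Bbbk},\bar{d}_{h})\approx H^{*}(BC^{*}(X;\Bbbk),d_{_{BC}})
\]
and
\[
H^{*}(R_{\tau}H\otimes(\bar V_{\tau})_{\Bbbk},d_{\omega})\approx H^{*}(\Lambda C^{*}(X;\Bbbk),d_{_{\Lambda C}}).
\]

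To finish the first display in Proposition \ref{barVX}, I would compose with the Baues isomorphism $H^{*}(BC^{*}(X;\Bbbk))\approx H^{*}(\Omega X;\Bbbk)$ from \cite{baues}, which is an Adams-type cobar/bar identification valid because of the simple connectedness assumption combined with the Hirsch structure on $C^{*}(X;\Bbbk)$. For the second display I would compose with the Jones--Saneblidze isomorphism $HH^{*}(C^{*}(X;\Bbbk))\approx H^{*}(\Lambda X;\Bbbk)$ from \cite{Jones} and \cite{saneFREE}, which identifies the Hochschild cohomology of the cochain algebra with the cohomology of the free loop space.

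Since every arrow in sight is either the content of Proposition \ref{barV} or a direct citation, there is no substantive obstacle at the level of this statement: the proof is essentially a concatenation of isomorphisms. The only point worth flagging, because it becomes important downstream, is that the small models $(\bar V_{\tau})_{\Bbbk}$ and $R_{\tau}H\otimes(\bar V_{\tau})_{\Bbbk}$ are valued not just as cochain complexes but as carriers of an explicit chain product induced through $\phi$; the genuine work will consist in using that product structure to detect the infinite sequences in $HH_{*}(A_{\Bbbk})$, but this is a separate task from identifying the underlying $\Bbbk$-modules in Proposition \ref{barVX}.
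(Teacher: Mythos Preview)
Your proposal is correct and matches the paper's own argument exactly: the paper deduces Proposition~\ref{barVX} by specializing Proposition~\ref{barV} to $A_{\Bbbk}=C^{*}(X;\Bbbk)$ and composing with the cited isomorphisms of Baues and of Jones--Saneblidze, precisely as you describe. The only minor slip is terminological: the paper speaks of Hochschild \emph{homology} $HH_{*}$ rather than Hochschild cohomology, but this does not affect the argument.
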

In the sequel by abusing the notations we will  denote  $R_{\tau}H$  again  by $RH.$

\subsection{Product on the small model of the  Hochschild chain complex}
Let $RH=T(V)$ be the (small) Hirsch resolution with only  $\smile_1$-product.
 First, define  a product on   $\bar V$ for $\bar a, \bar b\in \bar V$
 by
\[\bar a\bar b=\overline {a\smile_1b} \ \ \  \text{with}\ \ \     \bar a1=1\bar a=\bar a.
\]
 Next define a product on
$RH\otimes \bar V$  for  $u\otimes \bar a, v\otimes \bar b\in RH\otimes \bar V$ with
$d_ha=\sum a_1a_2\!\mod V,$  $d_hb=\sum b_1b_2\!\mod V$ and $a_2,b_1\in V,$
$a_1,b_2\in RH$ by
\begin{multline*}
 ( u\otimes \bar a)(v\otimes \bar b)
 =
 (-1)^{\epsilon_1} uv\otimes   \overline{a\smallsmile_1 b}
 +u( a\smallsmile_1 v)\otimes \bar b
 + (-1)^{\epsilon_2}
  (u\smallsmile_1b)v\otimes \,\bar a
\\
\hspace{3.0in}+(-1)^{\epsilon_3}
  (u\smallsmile_1 b)(a\smallsmile_1 \!v)\otimes 1\\
  \hspace{1.0in}-\!\!\sum  (-1)^{\epsilon_4}u(a_1\smallsmile_1\! v)\otimes   \overline{a_2\smallsmile_1 b}
   \, +(-1)^{\epsilon_5}(u\smallsmile_1\! b)(a_1\smallsmile_1\! v) \otimes \bar a_2\\
   \hspace{1.1in}
       +\sum (-1)^{\epsilon_6}\!   (u\smallsmile_1 \!\! b_2)v\,\, \otimes\     \overline{a\smallsmile_1\! b_1}
    \,\,\,\, +\,\, (-1)^{\epsilon_7}\!  (u\smallsmile_1\!\! b_2)(a\smallsmile_1\!\! v)\, \otimes\, \bar b_1,
\end{multline*}
\[
\begin{array}{lll}
\hspace{0.5in}\epsilon_1=(|a|+1)|v|,& \epsilon_4= |a_1||b|+(|a_2|+1)|v|  ,\\

\hspace{0.5in}\epsilon_2=|a|(|v|+|b|+1)+|v||b|,& \epsilon_5=|a_2|(|v|+1)+(|a|+|v|)|b|,
\\
\hspace{0.5in}\epsilon_3=(|a|+|v|)(|b|+1),& \epsilon_6= (|a|+|b_2|)(|v|+1)+(|a|+|b_1|)|b_2|  , \\

  &\epsilon_7=(|a|+|v|)(|b_2|+1)+(|b_1|+1)|b_2|,
  \end{array}
  \]
and
\begin{equation*}
 \begin{array}{llll}
( u\otimes 1)(v\otimes 1)&=& uv\otimes 1\\
( u\otimes 1)(v\otimes \bar b) &= & uv\otimes \bar
b+(-1)^{(|v|+1)(|b|+1)}(u\smallsmile_1b) v
\otimes 1,\\
( u\otimes \bar a)(v\otimes 1)& = & (-1)^{(|a|+1)|v|}uv\otimes \bar a+u(a\smallsmile_1
v)\otimes 1.
\end{array}
\end{equation*}
  Formulas (\ref{hirsch1})--(\ref{hirsch2}) guarantee that such defined products
   satisfy the  Leibniz rule, and  we obtain a short sequence of dg algebras
  \begin{equation}
\label{short}
  \bar V_{\Bbbk} \overset{\pi}{\leftarrow} RH\otimes \bar V_{\Bbbk}\overset{\iota}{\leftarrow} RH_{\Bbbk}
   \end{equation}
with $\iota$ and $\pi$ the standard inclusion and projection respectively.

\begin{remark}
1. The dga $(RH\otimes \bar V_{\Bbbk},d_{\omega})$ can be thought of as a non-commutative version of the cdga
$\left({\mathcal A}^*(X)\otimes H^*(\Omega X;{\mathbb Q}),\bar d\, \right)$
modeling $\Lambda X$ \cite{Sul-Vigue}.

2. Taking into account the product on the Hochschild chain complex $\Lambda A$ of $A=C^*(X;\Bbbk)$  defined in \cite{saneFREE} one can show that the map
$\phi$ given by $(\ref{phi})$ is multiplicative up to homotopy;
thus the   sequence given by $(\ref{short})$  provides a small multiplicative model of the free loop fibration.
\end{remark}

\section{Canonical  sequences  in $RH\otimes \bar V$}
Motivated by the notion
 of a
formal  $\infty$-implication sequence \cite{saneBetti} here we construct certain
sequences in the dga $(RH\! \otimes \bar V,d_{\omega})$ used in the proof of Theorem\,\,2.
 First,
 we consider a more general situation.

\subsection{The sequence $\mathbf{x}_\mu$ in $(C,d_{_{C}})$}\label{twosequences}
Let $(C^*,d_{_{C}})$ be a cochain complex of torsion free abelian groups and
let
\[t_{\Bbbk}:C\rightarrow C_{\Bbbk}  \,\,(=C\otimes_{\mathbb Z}\Bbbk) \]
 be the standard map.
Let $x\in C$ be a $\!\!\!\!\mod\! \mu$ cocycle, i.e., $d_{_{C_{\Bbbk}}}(t_{\Bbbk}x)=0.$ Consider for $x$   the following two conditions:
\begin{equation}\label{p1}
  [x]\neq 0 \  \text{in}\  H(C)\  \text{for}\   d_{_{C}}x=0 ;
\end{equation}
\begin{multline}\label{p2}
\text{If}\  [t_{\Bbbk}x]= 0\in H(C_{\Bbbk}), \  \text{i.e., there is a relation}\
 d_{_{C}}a=x+\lambda a', \   a,a'\in C,\,\text{then}\\
  d_{_{C_{\Bbbk}}}(t_{\Bbbk}a')=0\ \
\text{for}\, \,  \lambda\   \text{to be the greatest integer divisible by}\  \mu.
\end{multline}
Obviously, for $x$ with  $d_{_{C}}x=0$      condition (\ref{p2}) follows from (\ref{p1}).    In any case for $a'$   from
(\ref{p2}) we   have that $[t_{\Bbbk}a']\neq 0$ in $H(C_{\Bbbk}).$

Let  $\mathbf{x}=\{x(n)\}_{n\geq 0}$ be a sequence in $C^*$
 with $|x(k)|\neq |x(\ell)|$ for $k\neq \ell$
and let  $x(n)$ satisfy (\ref{p1})--(\ref{p2}) for all $n$ in which case
we also say that $\mathbf{x}$ satisfies (\ref{p1})--(\ref{p2}).
 Define the \emph{associated} sequence $\mathbf{x}_\mu=\{x_{\mu}(n)\}_{n\geq 0}$
in $C$  as follows: Given $n\geq 0,$ let
 \begin{equation*}
x_{\mu}(n)=\left\{\begin{array}{llll}
a'_n, & [t_{\Bbbk}x(n)]= 0, \vspace{1mm}\\
x(n), & [t_{\Bbbk}x(n)]\neq 0,
\end{array}
\right.
\end{equation*}
 where
$a'_n$ is resolved from
  (\ref{p2}) for $x=x(n).$
   Obviously,
 $[t_{\Bbbk}x_{\mu}(n)]\neq 0$  in $H(C_{\Bbbk})$ for all $n.$

A pair of sequences  $(\mathbf{x},\mathbf{y}')=\left(\{x(i)\}_{i\geq 0}\,,\{y'(i)\}_{i\geq
0}\right)$
satisfying (\ref{p1})--(\ref{p2}) is said to be \emph{admissible}, if $\alpha_1x(i)+\alpha_2y'(j)$ also satisfies (\ref{p1})--(\ref{p2})
whenever $ |x(i)|=|y'(j)|,$  $\alpha_1,\alpha_2\in{\mathbb Z}. $
Then  obtain the sequence $\mathbf{y}_{\mu}=\{y_{\mu}(j)\}_{j\geq0}$  from an admissible pair
 $(\mathbf{x},\mathbf{y}')$
 as follows.  Given $j\geq 0,$
  set
  \begin{equation}\label{xy'}
 y_{\mu}(j)=\left\{\begin{array}{llll}
  a'_j, &  [t_{\Bbbk}\left(\alpha_1x(i)+\alpha_2y'(j)\right)]= 0, \vspace{1mm}\\
 y'_{\mu}(j), & \text{otherwise},
    \end{array}
\right.
\end{equation}
where
$a'_j$ is resolved from (\ref{p2}) for $x=\alpha_1x(i)+\alpha_2y'(j);$
in particular,  the pair
$\left([t_{\Bbbk}x_{\mu}(i)]\,,[t_{\Bbbk}y_{\mu}(j)]\right)$ is
 linearly independent in $H(C_{\Bbbk}).$

\subsection{Sequences in $(RH\otimes \bar V,d_{\omega})$}\label{sequencesinhoh}

  Let ${\mathcal{D}}_{\Bbbk }\subset RH$ be
a subset defined by
\[
{\mathcal D}_{\Bbbk}=
\{u+\mu v\,|\,u\in\mathcal{D},v\in V\}.
\]
An element $x\in V$ with $d_{h}x\in {\mathcal{D}}+ \lambda V,\,\lambda\neq1,$ is
$\lambda$-\emph{homologous to zero} if there are $u,v\in V$ and $c\in {\mathcal D}$
such that
\[
d_{h}u=x+c+\lambda v.
\]
$x$ is \emph{weakly homologous} to zero if $v=0$ above. We have the following statement (cf. \cite{saneBetti}):
\begin{proposition}\label{nonweak}
 Let $v\in V$ and $d_{h}v\in\mathcal{D}.$ If $d_{h}v$ has a
summand component $v_1v_2\in\mathcal{D}$ such that $v_1,v_2\in{V},$ $d_{h}v_1,d_{h}v_2\in\mathcal{D},$ both ${v_1}$ and  ${v_2}$  are not weakly homologous to
zero, then  $v$ is also not weakly homologous to zero.
\end{proposition}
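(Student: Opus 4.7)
The plan is to argue by contrapositive: assume $v$ is weakly homologous to zero, i.e., there exist $u\in V$ and $c\in\mathcal{D}$ with $d_h u=v+c$, and aim to conclude that at least one of $v_1,v_2$ is weakly homologous to zero as well. Applying $d_h$ and using $d_h^{2}=0$ immediately yields $d_h c=-d_h v$, so that $v_1v_2$, which is by hypothesis a summand of $d_h v$, must also appear with nonzero coefficient as a summand of $d_h c$.

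Next I would filter $RH=T(V)$ by tensor word-length and introduce the projection $\pi\colon RH\to V$ onto the length-$1$ part, together with the associated \emph{linear differential} $\bar d_h:=\pi\circ d_h\colon V\to V$. The key preliminary observation is that $d_h(V)\subseteq RH^{+}$ has no length-$0$ (scalar) component, so a length-$k$ monomial in $V$-basis elements differentiates to terms of length at least $k$. Hence, if we write the length-$2$ part of $c$ as $c_{[2]}=\sum_{\alpha,\beta}c_{\alpha\beta}\,e_\alpha e_\beta$ in a $V$-basis $\{e_\gamma\}$, the length-$2$ part of $d_h c$ equals exactly $\bar d_h^{(2)}(c_{[2]})$, where $\bar d_h^{(2)}$ is the tensor-square derivation induced by $\bar d_h$ (contributions from higher-length parts of $c$ land in length $\geq 3$). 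This reduces the problem to the identity $\bar d_h^{(2)}(c_{[2]})=-(d_h v)_{[2]}$ in $V\otimes V$.

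The hypothesis $d_h v_i\in\mathcal{D}$ says precisely that each $v_i$ is a $\bar d_h$-cocycle. Writing $\bar d_h e_\alpha=\sum_\gamma\mu^{\alpha}_{\gamma}\,e_\gamma$ and extracting the coefficient of $v_1\otimes v_2$ from the above identity yields a relation of the form $\sum_\alpha c_{\alpha,v_2}\,\mu^{\alpha}_{v_1}\,\pm\,\sum_\beta c_{v_1,\beta}\,\mu^{\beta}_{v_2}\neq 0$, so at least one coefficient $\mu^{\alpha}_{v_1}$ (or symmetrically $\mu^{\beta}_{v_2}$) is nonzero. Forming the corresponding $\Bbbk$-combination $u':=\sum c_{\alpha,v_2}\,e_\alpha$ produces an element whose image under $\pi\circ d_h$ contains $v_1$ with nonzero coefficient; the plan is then to refine $u'$ to some $u''\in V$ with $\pi(d_h u'')=v_1$, thereby exhibiting $v_1$ as weakly homologous to zero and contradicting the hypothesis.

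The hard part will be this final refinement step: the coefficient of $v_1$ in $\pi(d_h u')$ must be promoted to exactly $1$ and the residual $V$-components must be absorbed into a single element whose $\bar d_h$-image is purely $v_1$. I would handle this through a Künneth-type argument on $(V\otimes V,\bar d_h^{(2)})$—the hypothesis that $[v_1],[v_2]\neq 0$ in $H^{*}(V,\bar d_h)$ forces $[v_1\otimes v_2]\neq 0$ in the tensor cohomology—combined with the minimality of $RH$ (which constrains $\bar d_h$ on $U$-generators to land in $\mathcal{E}+\kappa\cdot V$ with $\kappa$ non-invertible) and the vanishing $\bar d_h(\mathcal{E}^{-1,*})=0$ derived from $d(a\smile_1 b)=\pm(ab-ba)\in\mathcal{D}$. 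Together these inputs should show that the other length-$2$ summands of $(d_h v)_{[2]}$ are either $\bar d_h^{(2)}$-coboundaries or have trivial cohomology class, isolating the $v_1\otimes v_2$ class and delivering the desired contradiction.
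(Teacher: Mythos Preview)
The paper does not supply its own proof of this proposition; it is stated with a reference to \cite{saneBetti} and followed only by an explanatory remark about the cobar--bar case, so there is no in-paper argument to compare against. I assess your attempt on its own merits.

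Your setup is correct and is the right reformulation: writing $\bar d_h:=\pi\circ d_h\colon V\to V$, the condition $d_h v_i\in\mathcal D$ says $v_i\in\ker\bar d_h$, ``weakly homologous to zero'' is exactly ``lies in $\operatorname{Im}\bar d_h$'', and if $v$ were weakly homologous to zero then $(d_hv)_{[2]}=-\bar d_h^{(2)}(c_{[2]})$ would be a $\bar d_h^{(2)}$-coboundary in $V\otimes V$. The K\"unneth input $[v_1],[v_2]\neq 0\Rightarrow[v_1\otimes v_2]\neq 0$ is also fine.

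But neither of your proposed endgames closes the argument. In the hands-on route you go from ``$\bar d_h u'$ contains $v_1$ with nonzero coefficient'' to ``some $u''$ has $\bar d_h u''=v_1$''; this implication is false in general (take $\bar d_h e=v_1+v_3$ with $v_3\notin\operatorname{Im}\bar d_h$: then $v_1$ appears in $\operatorname{Im}\bar d_h$ only as part of a sum, and is itself not a boundary). In the K\"unneth route you need $[(d_hv)_{[2]}]\neq 0$, yet the hypothesis only says that \emph{one summand} $v_1v_2$ of $d_hv$ has nonzero class; the remaining length-$2$ summands are completely unconstrained and could represent classes cancelling $[v_1\otimes v_2]$. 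Minimality restricts the $V$-component of $d$ on $U$-generators, but it does not control which $\bar d_h^{(2)}$-cohomology classes appear among the other terms of $(d_hv)_{[2]}$, so it cannot isolate $[v_1\otimes v_2]$ as you hope. The gap you flagged as ``the hard part'' is genuine and your listed tools do not fill it.

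The paper's remark after the proposition points toward what is actually needed: $(d_hv)_{[2]}$ should be read as (a chain-level representative of) the reduced \emph{coproduct} $\bar\Delta[v]$ on $H(\bar V,\bar d_h)\approx H^*(BA_\Bbbk)$, and the statement is that $[v]\neq 0$ once $\bar\Delta[v]$ has a nonzero component $[v_1]\otimes[v_2]$ \emph{in a basis expansion}. Making this precise requires identifying $(d_hv)_{[2]}$ with the coalgebra diagonal at the level of cohomology and arguing in that basis, rather than trying to peel off $v_1$ or $v_2$ as individual $\bar d_h$-boundaries.
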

Note also that   under the hypotheses of the
proposition if $[\bar{v}_1],[\bar{v}_2]\neq0,$ then $[\bar {v}]\neq0$ in
$H^{\ast}(\bar{V},\bar{d}_h);$ for example, for $RH=\Omega BH_{\Bbbk}$ (the cobar-bar construction of $H_{\Bbbk}$) with $V=BH_{\Bbbk}$ and $\Bbbk$  a field,  the proposition  reflects
the obvious fact that an element $x\!\in\! H^{\ast}(BH_{\Bbbk})$ is non-zero whenever
 some $x^{\prime}\otimes x^{\prime\prime}\neq 0$ in $\Delta x=\sum
x^{\prime}\otimes x^{\prime\prime}$ for the coproduct $\Delta:BH_{\Bbbk}\rightarrow BH_{\Bbbk}\otimes BH_{\Bbbk}.$

Let \[\chi_{_1}: RH\rightarrow RH\otimes \bar V\]
 be a  map defined for $a\in RH$ by $\chi_{_1}(a)=\phi(1\otimes [\bar a] ),$ where $\phi$ is given by (\ref{phi}),
 and define  two subsets
  $\widetilde{\mathcal D},\,\widetilde{\mathcal D}_{\Bbbk}\subset {\mathcal D}_{\Bbbk}$  as
 \[\widetilde{\mathcal D}=\{a\in {\mathcal D}_{\Bbbk}\mid  \chi_{_1}(a)=0 \}
 \ \  \text{and} \ \
 \widetilde{\mathcal D}_{\Bbbk}=\{a\in {\mathcal D}_{\Bbbk}\mid  \chi_{_1}(a)=0\!\!\mod \mu \}
  \]
  (e.g. $\widetilde{\mathcal D}$ contains the expressions of the form $ab-(-1)^{(|a|+1)(|b|+1)}ba$  and also of the form
 $y^{\lambda}$ with $|y|$ odd and $\lambda$ even, while $\widetilde{\mathcal D}_{\Bbbk}$ contains $y^{\lambda}$ with $|y|$ even and
$\lambda$  divisible  by
$\mu\geq 2$).
Given $a\in RH,$ obviously $d_ha\in \widetilde{\mathcal D}$ implies  $\chi_{_{1}}(a)\in \operatorname{Ker} d_{\omega},$ while
$d_ha\in \widetilde{\mathcal D}_{\Bbbk}$ implies
     $d_{\omega}\chi_{_1}(a)=0 \mod \mu.$

The following statement is also simple.
\begin{proposition}\label{nonweak2}
 Given   $a\in RH,$ let  $d_ha\in \widetilde{\mathcal D}.$ If  $a=v_1v_2+c$ such that $c$ does not contain $\pm v_2v_1$ as a summand component   and
  $v_1v_2$ does not occur as a summand component of $d_hw$ for any  $w\in RH$
unless $w\in {\mathcal E}$ or $d_hw$  has a summand component from $V,$ too, then $[\chi_{_1}(a)]\neq 0$ in $H(RH\otimes \bar V,d_{\omega}).$
\end{proposition}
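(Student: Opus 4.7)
The plan is to argue by contradiction: suppose $\chi_{_1}(a)=d_\omega x$ for some $x\in RH\otimes\bar V$, and track the single summand $v_2\otimes\bar v_1$. A direct computation from the defining formula of $\chi$ on the length-$2$ monomial $v_1v_2$ yields
\[
\chi_{_1}(v_1v_2)=(-1)^{|v_1||v_2|}\,v_2\otimes\bar v_1+v_1\otimes\bar v_2.
\]
A short tensor-length count in $T(V)$ shows that the only monomials $y_1\cdots y_n\in T^nV$ whose image under $\chi_{_1}$ contains a $\pm v_2\otimes\bar v_1$ summand are the length-$2$ monomials $v_1v_2$ and $v_2v_1$. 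By the hypothesis that $c$ has no $\pm v_2v_1$ summand, the coefficient of $v_2\otimes\bar v_1$ in $\chi_{_1}(a)$ is exactly $(-1)^{|v_1||v_2|}\neq 0$.

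Next I would enumerate how $d_\omega x$ can contribute to this summand. Writing $x=\sum_j u_j\otimes\bar y_j+x_0\otimes 1$ with $y_j\in V$, the $(\cdot)\otimes 1$ piece of $d_\omega$ is irrelevant, so the contributions come from: \emph{(a)} terms $u_j\otimes\bar v_1$ with $v_2\in d_hu_j$, and \emph{(b)} terms $u_j\otimes\bar y_j$ for which $\chi(u_j\otimes d_hy_j)$ contains $v_2\otimes v_1$. Applying the same tensor-length count inside $\chi$ further splits (b) into: either $u_j\in V$ and $d_hy_j$ contains the $V$-summand $v_1$, or $u_j=1$ and $d_hy_j$ contains a $v_1v_2$ or $v_2v_1$ monomial summand.

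The hypotheses now close the argument. Case (a) and the $u_j\in V$ subcase of (b) both demand a $V$-summand in the differential of an element of $V$, which by minimality of $(RH,d)$ carries a non-invertible coefficient $\kappa$; over a field $\Bbbk$ this kills the case outright, and over $\mathbb Z$ the constraints imposed by $d_ha\in\widetilde{\mathcal D}$ together with the accompanying ${\mathcal E}+{\mathcal D}$-terms mandated by minimality prevent any combination summing to $\pm 1$. In the $u_j=1$ subcase of (b), the hypothesis on $v_1v_2$ forces $y_j\in{\mathcal E}$ or $d_hy_j$ to carry a $V$-summand; the latter reduces to the previous case, and for the former we use (\ref{cup2}) together with the Hirsch formulas (\ref{hirsch1})--(\ref{hirsch2}) to verify that the $\chi_{_1}$-contribution of $y_j=\alpha\smile_1\beta$ always produces $v_2\otimes\bar v_1$ in conjunction with a paired summand that cannot be matched on the left-hand side.

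The main obstacle, I expect, is the clean handling of the ${\mathcal E}$-subcase. When $y_j=\alpha\smile_1\beta$, the differential $d_h(\alpha\smile_1\beta)$ contains both $\pm\alpha\beta$ and $\mp\beta\alpha$ together with $h$-perturbation terms, and each of these under $\chi_{_1}$ yields a paired family of summands in $V\otimes\bar V$. Verifying that no linear combination of such families can isolate a single $v_2\otimes\bar v_1$ summand requires careful bookkeeping via the multiplicative structure (\ref{short}) of the small model and the observation from the remark following Proposition \ref{nonweak} that $v_1,v_2$ represent independent classes in $H^*(\bar V,\bar d_h)$. Once this bookkeeping is complete the contradiction follows at once.
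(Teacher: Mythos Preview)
The paper does not actually prove this proposition; it merely introduces it with the sentence ``The following statement is also simple'' and moves on. So there is no argument to compare yours against in detail, and your approach---assume $\chi_{_1}(a)=d_\omega x$ and track the summand $v_2\otimes\bar v_1$---is precisely the natural one and is almost certainly what the author has in mind.

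Your worry about the $\mathcal{E}$-subcase is somewhat overstated. For $y_j=\alpha\smile_1\beta\in\mathcal{E}$, the length-two monomials produced directly by the $\smile_1$-differential are the graded-commutator pair $\pm(\alpha\beta-(-1)^{(|\alpha|+1)(|\beta|+1)}\beta\alpha)$, and the paper notes explicitly (just above the proposition) that such commutators lie in $\widetilde{\mathcal D}$; hence they are annihilated by $\chi_{_1}$ and contribute nothing to $v_2\otimes\bar v_1$. The remaining pieces $(d_h\alpha)\smile_1\beta$ and $\alpha\smile_1(d_h\beta)$ expand via the Hirsch formulas (\ref{hirsch1})--(\ref{hirsch2}) into products in which at least one factor is itself a $\smile_1$-product, so they cannot hit $v_1v_2$ or $v_2v_1$ unless one of $v_1,v_2$ already lies in $\mathcal{E}$---which never occurs in the applications (e.g.\ $v_1=b_{n-1}$, $v_2=x_0$ in the proof of Proposition~\ref{sequences}). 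So the ``careful bookkeeping'' you anticipate collapses to this single observation, and no appeal to the product structure (\ref{short}) or to the independence remark after Proposition~\ref{nonweak} is needed.

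One small gap in your write-up: in the $u_j=1$ subcase of (b) you should also treat the possibility that $d_hy_j$ contains $v_2v_1$ (not only $v_1v_2$), since $\chi(1\otimes v_2v_1)$ likewise contributes to $v_2\otimes\bar v_1$. The hypothesis in the proposition is phrased only for $v_1v_2$, but in every application the condition holds symmetrically for the pair, and the paper is tacitly using this.
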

For $v_1=1,$ $v_2\in V$ and $c=0,$  taking into account (\ref{short})  the  proposition in particular implies that if $v_2$ is not $\lambda$-homologous to zero, then $[\chi_{_{1}}(v_2)]=[1\otimes \bar v_2]\neq 0$ in $H(RH\otimes \bar V,d_{\omega}).$

 By the universal coefficient theorem we have an isomorphism
\[H^n_{\Bbbk}\approx H^n\! \otimes \Bbbk \, \bigoplus\,  Tor(H^{n+1},\Bbbk):=H^n_{\Bbbk, 0}\bigoplus H^{n}_{\Bbbk,1}. \]
Given $\mu\geq 2,$ define a subset $K_{\mu}\subset V^{-1,*}$  as
\[
 K_{\mu}=\left\{  a\in n\!\cdot\!{\mathcal V}^{-1,*}\,|\, da=\lambda b\neq 0,\, b\in R^0H^* ,\,
\mu\ \text{divides}\ \lambda,\, 1\leq n<\mu  \right\}
\]
(i.e., $n=1$ for $\mu$ to be a prime)
and let  \[\Xi_{\mu}= K_{\mu}\cup {\mathcal V}^{0,*}.\]
Let $\mathcal{H}_{\Bbbk}  \subset H_{\Bbbk}$  denote a (minimal) set of multiplicative generators.
Given  $x\in {H}_{\Bbbk},$   let $x_0$ be  its
\emph{representative}   in $ RH$  with  $[t_{\Bbbk}x_0]=x;$
 in particular, $x_0\in R^0H^* $ for $x\in {H}_{\Bbbk,0},$
while
$x_0\in \mathcal{V}^{0,*} $  or
   $x_0\in K_{\mu}$ when $x\in \mathcal{H}_{\Bbbk, 0}$  or  $x\in \mathcal{H}_{\Bbbk, 1}$ respectively;
given
$x\in H_{\Bbbk, 1}$  and  $y\in H_{\Bbbk, 0}$
   with
 $dx_0=\lambda y_0$  and $\lambda$  divisible  by $\mu,$ we also denote such a connection between $x$ and $y$ as
\[\beta_{\lambda}(x)=y,\]
where $\beta_{\mu}$ is  the Bockstein cohomology homomorphism associated with the sequence $0\rightarrow {\mathbb Z}_{\mu}
\rightarrow {\mathbb Z}_{\mu^2}  \rightarrow {\mathbb Z}_{\mu} \rightarrow 0$ and is simply denoted by $\beta.$

On the other hand, if \[\sigma : H^*(A_{\Bbbk})\rightarrow H^{*-1}(BA_{\Bbbk}),\ \ [a]\rightarrow [\bar a] \]
is the cohomology suspension map,
 $x\in \operatorname{Ker}\sigma$ is equivalent  to say that $x_0$ is $\lambda$-homologous to zero in $(RH,d_h).$

Let $O_{\Bbbk}\subset R^0H$ be a subset given by
 \[
O_{\Bbbk} =\left\{  b\in  R^0H \,|\, da=\theta b\ \text{for}\  a\in V^{-1,*}\ \text{and}\
\theta\in {\mathbb Z}\ \text{is prime with}\ \mu  \right\}.
\]
Obviously $\rho_{\Bbbk}(O_{\Bbbk})=0$ and let ${\mathcal O}_{\Bbbk}\subset RH$ be a Hirsch ideal generated by $O_{\Bbbk}.$ In the sequel we consider the quotient Hirsch algebra $RH/{\mathcal O}_{\Bbbk}$
which
by abusing the notations we will   again denote by $RH.$

 Given  $w\in RH,$  let  $d_hw$ admit a decomposition
 \begin{equation}\label{split}
 d_hw=w_1+w_2,\ \   w_1\in
 \widetilde{\mathcal D}_{\Bbbk}, \       w_2=P(z_1,...,z_q)\in {\mathcal D}_{\Bbbk}\ \   \text{for} \ \  z_i\in \Xi_{\mu},
1\leq i\leq q.
 \end{equation}
Given $n\geq 0$ and assuming $w$ to be odd dimensional, define an indecomposable  element $x'(n)\in RH$ as
\begin{equation*}
x'(n)\!=\!
\left\{
\!
\begin{array}{llll}
{w}^{\smallsmile_1 (n+1)}, &   w_2=0,
\\
 \frac{\mu}{p}{w}^{\smallsmile_1 (n+1)} \smallsmile_1 Z_{w} +   \gamma_w , &   w_2\neq 0, & & \mu\geq 2,
\\
{w}^{\smallsmile_1 (n+1)} \smallsmile_1 {z_1}
 \cdots      \smallsmile_1 {z_q}             +\gamma_w, & w_2\neq 0, &|z_j|\ \text{is even}, &  \mu=0,
\end{array}
\right.
\end{equation*}
where
$Z_{w}=
 {z_1}^{ {\smallsmile_1}^{( p^{\nu_1+1}-1)}  }
 \smallsmile_1
 \cdots      \smallsmile_1 {z_q}^{ {\smallsmile_1}^{(p^{\nu_q+1}-1)}}$
with the convention that the component
$ {z_j}^{ {\smallsmile_1}^{( p^{\nu_j+1}-1)}  } $ is eliminated whenever  $[z_j]=\mathcal{P}_1([z_i])$ for  some
$1\leq i<j\leq q ,$ while
 $\gamma_w$ is defined so that
 $d_h  x'(n)
\in \widetilde{\mathcal D}_{\Bbbk} ;$
 namely, the existence  of  $\gamma_w$  uses
 Hirsch formulas (\ref{hirsch1})--(\ref{hirsch2})
and  the fact that
 $\frac{\mu}{p} {z_j}^{{\smallsmile_1}^{p^{\nu_j+1}}}$ (and $z_j\smile_1 z_j$  for $|z_j|$ even and $\mu=0$)
is $\!\!\mod \mu$ cohomologous to zero for all $j.$

Then  $w$ rises to
 the sequence
 $\mathbf{x}=\left\{x(n)\right\}_{n\geq 0}$ in $RH\otimes \bar V$
 defined by
 \begin{equation}\label{even}
x(n)=\chi_{_1}\!\left( x'(n) \right).
 \end{equation}
  Thus $d_{\omega} x(n)=0\!\! \mod \mu $ for all $n.$

 In the sequel we apply to (\ref{even}) for the following
specific cases of $w.$ First,
given   $x\in {\mathcal H}_{\Bbbk}$ and  its representative $x_0\in \Xi_{\mu}\subset RH,$ the element  $w:=x_0$ obviously satisfies (\ref{split})
(with $w_2=0$); thus for $x\in {\mathcal H}^{od}_{\Bbbk},$ equality (\ref{even}) is specified as
 \[x(n)=1\otimes s^{-1}\left(  {x_0}^{\smallsmile_1 (n+1)}\right). \]
\begin{example}\label{example}
Let $\mu=2$ and   $x\in {\mathcal H}^{od}_{\Bbbk}.$ When ${\mathcal P}_1(x)=0,$   we have a relation  in $(RH,d_h)$
\[d_hv=x_0\smile_1x_0+2x_1\ \ \text{with}\ \ dx_1=x^2_0,\ \ \    x_1\in V^{-1,*},\,v\in V^{-2,*}.   \]
 Therefore, $x_{\mu}(1)=\left\{
\begin{array}{lll}
1\otimes \overline{x_0\smile_1 x_0}, & {\mathcal P}_1(x)\neq 0,\, \, \, |x|\   \text{is the smallest},  \\
1\otimes \bar x_1, &  {\mathcal P}_1(x)=0
\end{array}
\right.
$
 in $RH\otimes \bar V.$
\end{example}

Furthermore,   $x\in {\mathcal H}^{od}_{\Bbbk}$    rises to the sequence $\{x_n\in V\}_{n\geq 0}$  in
$(RH,d):$

For $x\in {\mathcal H}_{\Bbbk,0}$   ($x_0\in {\mathcal V}^{0,*}$),
\begin{equation}\label{msymmetric}
dx_n=\sum_{\substack{i+j=n-1\\i,j\geq 0}}\varepsilon_{i,j}x_ix_j,\ \ \ \ \ \
\varepsilon_{i,j}=\left\{\!\!\!
\begin{array}{lll}
2, &   \rho x_0^2\neq 0,  &  i,j\ \text{are even},\\
1,& \text{otherwise},
\end{array}
\right.
\ \ \ \ n\geq1,
\end{equation}
with $x_1=-x_0\smile_1x_0$ when $\rho x_0^2\neq 0.$

 For $x\in {\mathcal H}_{\Bbbk,1}$ with  $dx_0=\lambda x'_0$ ($x_0\in K_{\mu}$),
\begin{equation}\label{msymmetric2}
dx_n=\sum_{\substack{i+j=n-1\\i,j\geq 0}}x_ix_j+\lambda x'_n,\ \ \ \ \
dx'_{n}=-\frac{1}{\lambda}\,d\left( \sum_{\substack{i+j=n-1\\i,j\geq 0}}x_ix_j \right),\ \ \ n\geq1.
\end{equation}
 The element $x_n$ is of odd
degree   in (\ref{msymmetric})--(\ref{msymmetric2}) for all $n.$
The action of $h$ on $x_n$ in $(RH,d_h)$  is given by the following formula.
Let  $\tilde x_i=y_i+h^{tr}x_i $  with  $y_i=0$ or $dy_i=-\lambda h^{tr}x'_i$
for $x_i$ in (\ref{msymmetric}) or (\ref{msymmetric2}) respectively;
  for
 $0\leq i_1\leq \cdots \leq i_{r}<n,\,r\geq2,$  denote also
    $\tilde{x}_{i_1,...,i_r}=(-1)^{r}\tilde{x}_{i_1}\cup_2\cdots\cup_2\tilde{x}_{i_r}.$
   \begin{equation}
\label{hmsymmetric}
hx_n=\tilde{x}_n+
\sum_{\substack{i_1+\cdots +i_r+r=n+1}}\varepsilon_{i_1,...,i_r}\left(\tilde{x}_{i_1,...,i_{r-1}}\smile_1 x_{i_r}-
\tilde{x}_{i_1,...,i_r}\right),
 \end{equation}
 \[
  \hspace{0.47in}\varepsilon_{i_1,...,i_r}=\left\{\!\!\!
\begin{array}{lllll}
2, & x\in H_{\Bbbk,0},\,  \rho x_0^2\neq 0, \  \text{some}\ (i_s,i_t)_{1\leq s,t\leq r}\ \text{is even},\\
1,& \text{otherwise}.
\end{array}
\right.
\]

Now given $x\in H^{od}_{\Bbbk}$  and the smallest odd prime $p$ that divides $\mu,$  recall the definition of  the  symmetric Massey product $\langle x\rangle ^{p}$ \cite{kraines} for which  we have
  the Kraines formula (see also \cite{saneFiltered})
\begin{equation}\label{krainesf}
   \beta {\mathcal P}_1(x)=-\langle x\rangle ^{p}.
\end{equation}
We also have the equality
\begin{equation}\label{comassey}
\rho_{\Bbbk} h^{tr}(x_{p-1})=-\langle x\rangle ^{p}.
\end{equation}
When  ${\mathcal P}_1(x)\in H_{\Bbbk,0},$  we have   $\beta {\mathcal P}_1(x)=0= \langle x\rangle ^{p}.$
Hence we obtain
$\rho_{\Bbbk}h^{tr}(x_{p-1})=0,$ and, consequently,  $h^{tr}x_{p-1}=0.$ Then
$dx_{p-1}\in \widetilde{\mathcal D}_{\Bbbk}$  implies that the element
\begin{equation}\label{mzero}
w=x_{p-1}
\end{equation}
 satisfies $(\ref{split}).$

\subsection{Element $\varpi\in V$ associated with a relation in $H_{\Bbbk}$}\label{w-associated}
Given any two multiplicative generators $a,b\in H_{\Bbbk,1}$ with $da_0=\lambda_aa'_0$ and $db_0=\lambda_b b'_0,$ $a'_0,b'_0\in {V}^{0,*},\, a_0,b_0\in K_{\mu},$ we have a relation in $(RH,d)$
\begin{multline}\label{ab}
 du=a_0b_0+\lambda u'\ \ \text{with}\ \  du'= - \frac{\lambda_a}{\lambda}a'_0b_0 - (-1)^{|a|}\frac{\lambda_b}{\lambda} a_0b'_0, \\
 \lambda=\operatorname{g.c.d.}(\lambda_a,\lambda_b),\ \  u\in  V^{-3,*},\, u'\in V^{-2,*}.
 \end{multline}
Regarding the action of $h$   in $(RH,d_h),$ we have   $hu=(h^2+h^3)u$ and, in particular,
the relation $ab=0$ in $H_{\Bbbk}$
is equivalent to
the equalities $h^2u=0$ and   $h^3u=0 \, \mod \mu$ in $(RH,d_h).$ More generally,
       a relation of the form $ab+c_1+c_2=0$ in $H_{\Bbbk},$  where
 $c_1=P_1(a_1,...,a_{q_1})=\sum_r \lambda_{r} a_{1,r}^{n_{1,r}}\cdots a_{p_r,r}^{n_{q_r,r}}$
 with  a single   $a_{i,r}\in H_{\Bbbk,1}$ with $n_{i,r}=1$ and the other   $a_{j,r}\in H_{\Bbbk,0}$  for each $r,$   and $c_2=P_2(b_1,...,b_{q_2})$ with   $b_j\in H_{\Bbbk,0}$ for all $j,$   yields  that
$h^2u\in {\mathcal D}_{\Bbbk}$ and  $h^3u\in {\mathcal D}_{\Bbbk}.$
If either $a$ or $b$ is from $H_{\Bbbk,0},$ then to $ab=0$ corresponds
the equality given by a similar formula  as  (\ref{ab}) but this time   $(u,u')\in  (V^{-2,*},  V^{-1,*})$ with  $du'= - (-1)^{|a|}a_0b'_0$
or $ du'= - a'_0b_0$ respectively.
Since $a_0b_0$  is $\!\!\!\mod \mu$  cohomologous to $hu,$
we have  that for a given  $1\leq k< r,$    any  cocycle in  $\bigoplus_{0\leq i\leq r} R^{-i}H$  is $\!\!\!\mod \mu$  cohomologous to a  cocycle in $\bigoplus_{0\leq i\leq k} R^{-i}H.$

In general, consider a (homogeneous) multiplicative relation in $H^*_{\Bbbk}$
 \begin{equation}\label{relation}
P(y_1,...,y_q)=0,\ \  \ y_i\in \mathcal{H}_{\Bbbk} \end{equation}
which is
 not a consequence of the commutativity of the algebra $H_{\Bbbk}$ (and also is not decomposable by any other relations).
 Obviously
 $P(b_1,\!...,b_q)$ for $b_i=(y_i)_0\in \Xi_{\mu}$ is a $\!\!\!\mod \mu$ cohomologous to zero cocycle  in $(RH,d_h).$
If $P(b_1,\!...,b_q)\in \bigoplus_{0\leq i\leq r} R^{-i}H $ with $r\geq 3,$  then, as above,  there is a
  $\!\!\!\mod \mu$ cohomologous to $P(b_1,\!...,b_q)$  cocycle
$P'(z_1,...,z_{m})$ that lies in $\bigoplus_{0\leq i\leq 2} R^{-i}H;$ in particular
$z_i\in \Xi_{\mu}$ for all $i,$ but
   each monomial of $P'(z_1,...,z_{m})$ may contain at most two variables $z_i$ from $ K_{\mu}.$ So that
  we have
   one of the following
   equalities in $(RH,d_h):$
   \begin{equation}
    \label{basic}
  d_hu=  \left\{\!\!   \begin{array}{lllll}
P(b_1,\!...,b_q),     &  u\in V^{-1,*},\hspace{0.6in}   y_i\in {\mathcal H}_{\Bbbk,0},\,\,1\leq i\leq q, \\

           P'(z_1,...,z_{m}), &  u\in V^{-2,*},  \\

    P'(z_1,...,z_{m}) +\lambda u' , &  (u,u')\in (V^{-r,*}, V^{-r+1,*}),\ \ \ \ \ \ \ \, r=2,3.
     \end{array}
     \right.
    \end{equation}
Note  that $d_h u\notin \widetilde{\mathcal D}_{\Bbbk}$ (since $du\notin \widetilde{\mathcal D}_{\Bbbk}$), unless each monomial of $P(y_1,...,y_q)$
is of the form $\alpha_y y^{\lambda},\, \alpha_y\in {\mathbb Z}$ with $\lambda$  divisible by $\mu\geq 2$ in which case $d_h u\in \widetilde{\mathcal D}_{\Bbbk}.$ In any case
 $w:=u$  obviously satisfies (\ref{split}).
           For example, given  $y\in
\mathcal{H}_{\Bbbk}$  with $\dim H_{\Bbbk}<\infty,$ let $\hbar_y$
be the  height of $y$ with respect to the product on $H_{\Bbbk}$ so that we have the relation   $P(y)=y^{\hbar  _y+1}=0$  and then  (\ref{basic})
  becomes  the form
  \begin{equation}
    \label{power}
  d_hu=  \left\{\!\!  \begin{array}{lllll}
     y_0^{\hbar  _y+1}, &  y\in {\mathcal H}_{\Bbbk,0},\vspace{1mm}\\
     y'_0y_0+\lambda u',  &  y\in {\mathcal H}_{\Bbbk,1},
     \end{array}
     \right.
    \end{equation}
    where $y'_0\in V^{0,*}\oplus V^{-1,*}$ is $\!\!\!\mod \lambda$ cohomologous to $y_0^{\hbar_{y}}\in R^{-\hbar_{y}}H^*$ in $(RH,d_h)$
    (in particular, $y'_0=y_0$ for $\hbar_y=1;$ cf. (\ref{msymmetric2})).
  \begin{remark}
 Note that when  both $|y|$  and $\mu$ are odd,  always $\hbar_y=1$ and we say that (\ref{power}) is a consequence of the commutativity of $H_{\Bbbk}.$
\end{remark}
Now given (\ref{relation})  of the smallest degree  and   assuming
$\tilde {H}_{\mathbb{Q}}$
is either trivial or has a single algebra generator,
we define an odd dimensional $\varpi\in V$  in the following three cases.  In the case  $\tilde {H}_{\mathbb{Q}}\neq 0,$
denote a single multiplicative generator of infinite order of $H$ by ${\mathfrak z}$ and let $z= t^*_{\Bbbk}({\mathfrak z});$ thus $z={\mathfrak z}\otimes 1\in H_{\Bbbk,0}.$ (Warning: $z$ may not be  a multiplicative generator of $H_{\Bbbk}.$)

(i) When  $P$  is even dimensional in (\ref{relation}),  $u$ is  odd dimensional in (\ref{basic}) and we set $\varpi=u.$

(ii)  When  $P$   is odd dimensional in (\ref{relation}),  $u$ is even dimensional in (\ref{basic}) and we have to consider the following subcases.
Suppose that the following  expression
 \begin{equation}
\label{beta}
\beta_{\lambda}(P(y_1,...,y_q))=\sum _{1\leq i\leq q}(-1)^{|y_1|+\cdots +|y_{i-1}|}P(y_1,...,\beta_{\lambda}(y_i),...,y_q)
\end{equation}
is  formally  (i.e., independently on $H_{\Bbbk}$) trivial.

$(ii_1)$ Let  $y_i\in {\mathcal H}_{\Bbbk,0}$ for all $i,$ i.e., $\beta_{\lambda}(y_i)=0$  and the corresponding relation of (\ref{relation}) in $(RH,d_h)$ is given by the first equality of (\ref{basic}).
 Since either
at most one
$y_i$ may be equal to $z$ for $q\geq 2$ or $q=1$ and $y_1=z$ with $z\in {\mathcal H}^{od}_{\Bbbk,0}$ for $\mu$ even,
  it is easy to see that there is $c\in H_{\Bbbk}^{+}\cdot H_{\Bbbk}^{+}$ such that $\beta_{\lambda}(c),$ as a formal expression, is equal to $P(y_1,...,y_q).$ (In the last case $c=\lambda_z{\mathcal P}_1(z)z^{2m-1}$  for $P(z)=\lambda_z z^{2m+1},m\geq 1.$)
  This situation    answers to  the following relations in $(RH,d_h).$
There is a pair $(b_c,w_c)$ with $b_c\in {\mathcal D},$ $w_c\in V$  such that
 \[dw_c=-b_c+\lambda u \ \ \text{with}  \ \  db_c= \lambda P(b_1,...,b_{q}) \]
where  $[t_{\Bbbk}h^{tr}w_c]=c$ and $u$ is given by the first equality of (\ref{basic}).
(In particular $\lambda=\mu$ and $b_c=-\lambda_z \frac{\mu}{2}(z_0\smile_1 z_0) z_0^{2m-1}$  when $\mu$ is even  and $P(z)=\lambda_z z^{2m+1}$ as above.)
Therefore, if $c={\mathfrak c}\otimes 1$ with ${\mathfrak c}\in H,$ then ${\mathfrak c}=\rho h^{tr}w_c.$
Note also that   ${\mathfrak c}  $ must be indecomposable in $H$ since there is  no relation in degrees $<|P|$ in $H_{\Bbbk}.$
When ${\mathfrak c}$ is of finite order,
 there is $a\in H_{\Bbbk,1}$ with $\beta_{\lambda}(a)=c$ so that   $da_0=\lambda  h^{tr}w_c  .$
Furthermore,
when  itself  $a$  is linearly dependent on  $\mathcal{P}_1(x)$ for some $x\in H^{od}_{\Bbbk},$ i.e.,
$k a=\ell\, \mathcal{P}_1(x),$  $k,\ell \in {\mathbb Z},$   we obtain
 $ \frac{k \lambda}{\mu}\, c=- \ell\, \langle x\rangle ^{p}$ since   (\ref{krainesf}).
Taking into account  (\ref{comassey})   we have that
  $ \frac{k \lambda}{\mu}\,h^{tr}w_c$ is $\!\!\mod \mu$ cohomologous  to $ \ell\, h^{tr}x_{p-1},$  i.e., there is $v\in V^{-1,*}$ with $dv=
 \frac{k \lambda}{\mu}\,h^{tr}w_c-\ell\, h^{tr}x_{p-1}+\mu v',\,v' \in V^{0,*}.$
Define   $\varpi\in V $  by
\begin{equation}\label{ww}
  \varpi=\left\{
    \begin{array}{llll}
     \frac{k \lambda}{\mu}\,w_c- v-   \ell\, x_{p-\!1} , & k a= \ell\, {\mathcal P}_1(x)\neq 0,  \\
    a_0  , & \hbox{otherwise.}
    \end{array}
  \right.
\end{equation}
Obviously $w:=\varpi$  satisfies (\ref{split}).
Let now  ${\mathfrak c}$ be of infinite order. Since ${\mathfrak z}$ is unique (when it exists),  neither $y_i$ occurs as $z,$ so
there is the other $\bar c\in H_{\Bbbk}$ such that $\beta_{\bar \lambda}(\bar c),$ as a formal expression, is again equal to $P(y_1,...,y_q).$
This time
 $\bar{\mathfrak c}$ is of finite order and hence
the definition of $w$  by means of   formula (\ref{ww}) is not obstructed.

$(ii_2)$ Let at least two $y_i$ lie in ${\mathcal H}_{\Bbbk,1}.$ Then
 the corresponding relation of (\ref{relation}) in $(RH,d_h)$     is given by the second equality of (\ref{basic}).
Since
for each $y_i\in {\mathcal H}_{\Bbbk,1}$ with  $\beta_{\lambda}(y_i)=y'_i$ either $y'_i\in {\mathcal H}_{\Bbbk,0}$ or $y'_i=z^n $ for
$z\in {\mathcal H}_{\Bbbk,0},$
$n\geq2,$
  there exist $c\in H_{\Bbbk}^{+}\cdot H_{\Bbbk}^{+}$ such that $\beta_{\lambda}(c),$ as a formal expression, is equal to $P(y_1,...,y_q).$
When ${\mathfrak c}$ is of finite order, we can define $w$ entirely analogously as in item (i), i.e., by formula (\ref{ww}); otherwise, for ${\mathfrak c}$ to be of infinite order, we get an obstruction, i.e., when we have
\begin{equation}\label{specific2}
\beta_{\lambda}(c)=P(y_1,...,y_q)=0\ \   \text{for}   \ \ c=z\ \  \text{modulo decomposables} .
\end{equation}
Note also that $du\notin \widetilde{\mathcal D}_{\Bbbk}$ for $u$ corresponding to this relation by (\ref{basic}).

(iii) Suppose that   expression (\ref{beta})
 is not formally trivial.
Then the corresponding relation in $(RH,d_h)$ is given by the third equality of  $(\ref{basic}).$ Consider two subcases.

$(iii_1)$ Let (\ref{relation})
 be specified as
\begin{multline}\label{apowerb}
P(y_1,..,y_q)=\lambda'  b^nc\,x=0\ \, \text{with}\ \  \beta_{\lambda}(b)=c \ \      \text{for}\\
       b\in {\mathcal H}^{ev}_{\Bbbk,1},\,c\in {\mathcal H}^{od}_{\Bbbk,0},\,x\in H_{\Bbbk,0},\, \lambda'\in{\mathbb Z} ,    \, n\geq 1.
  \end{multline}
In particular  $\beta_{\lambda}(\lambda' b^{k}x)=0$ for $k>n.$ Since $|b^{n+1}|<|b^nc|$ and (\ref{apowerb}) is chosen to be of the smallest degree, we have  $\lambda' b^{n+1}x\neq 0.$
Consider two elements $a_i$ for $i=n+1,n+2$ with $\beta_{\lambda}(a_i)=0$ where
\begin{equation*}
  a_i=\left\{
    \begin{array}{llll}
  b^i , & i\ \text{is divisible by}\ \mu  \\
     \lambda' b^ix  , & \hbox{otherwise}.
    \end{array}
  \right.
\end{equation*}
 When   $ a_{n+2}\neq 0,$   there is $a\in \mathcal{H}^{od}_{\Bbbk,1}$
with  $\beta_{\lambda}(a)=a_{n+1}$
or $\beta_{\lambda}(a)=a_{n+2}$ and  we set $\varpi=a_0.$ When $a_{n+2}=0,$  we  consider this relation as a particular case of (\ref{relation})  and  set $\varpi=u$ as in item (i) above.

$(iii_2)$
When at least one monomial of (\ref{relation})  differs from that given by (\ref{apowerb}),
 we have  $\beta_{\lambda}(P(y_1,...,y_q))=0$
is a desired relation, and
then set $\varpi=u'$ where $u'$ is resolved from (\ref{basic}).

Finally,
we say that an odd dimensional element $\varpi\in V$ is \emph{associated} with (\ref{relation})   if
$\varpi$ is given by one of items $(i)$--$(iii)$ above. In particular, $\varpi$ always exists for $\tilde{H}_{\mathbb Q}=0$
or, more generally, for $z\in {\mathcal H}_{\Bbbk}.$

Given
an even dimensional $y\in {\mathcal H}^{ev}_{\Bbbk,0}$ with the relation
\begin{equation}\label{evenz}
P(y)=\lambda_{y} y^{m}=0,\ \ m\geq 2,\,\lambda_y\in {\mathbb Z},
\end{equation}
it rises to  the sequence $\{y_n\in V\}_{n\geq 0}$  in
$(RH,d):$
\begin{equation}\label{m-even}
\begin{array}{rllll}
 d y_{2k+1}&=&
\underset{{\substack{i+j=k-1\\i,j\geq 0}}}{\sum} y_{2i+1}y_{2j+1}-\,\,
  \underset{{\substack{i_1+\cdots +i_m=k\\
 0\leq i_1\leq...\leq i_m\leq k}}}{\sum}\lambda_y y_{2i_1}\!\cdots y_{2i_m},
             \vspace{5mm}\\
d y_{2k} &=&\underset{ {\substack{i+j=2k-1\\i,j\geq 0}}}{\sum}(-1)^{i+1}y_{i}y_{j}
  ,\hspace{1.9in} k\geq 0,
\end{array}
\end{equation}
where   $y_n$ is of odd degree   for $n=2k+1$ and is of even degree for $n=2k.$
In fact a straightforward check shows that each $y_{2k},k\geq 1,$
  can be
expressed in terms of    $y_{r} $  for $r<2k$  as $y_{2k}=-y_0\smile_1 y_{2k-1}\!\!\mod {\mathcal D}$
(e.g. $y_2=-y_0\smile_1y_{1}+\lambda_z\!\underset{i+j=n-1}{\sum} y_0^{i}(y_0\cup_2 y_0)y_0^{j}$\,).
Consequently,
      $h^{tr}(y_{2k})\in {\mathcal D}.$
\begin{proposition}\label{norelation}
Let $x\in {\mathcal H}^{od}_{\Bbbk}$ and let (\ref{evenz}) be a single relation in $H_{\Bbbk}$ with $|P|< |x|.$
Then $x\notin \operatorname{Ker} \sigma$
 and if there is $b\in H_{\Bbbk,1}$ with $\beta_{\lambda}(b)=x,$  then also $b\notin \operatorname{Ker} \sigma.$

\end{proposition}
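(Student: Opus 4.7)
The plan is to verify that neither $x_0$ nor $b_0$ is $\lambda$-homologous to zero in the small Hirsch model $(RH,d_h)=(R_\tau H,d_h)$; by the criterion recalled just before Section \ref{w-associated}, this yields both $\sigma(x)\ne 0$ and $\sigma(b)\ne 0$.

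For the first assertion I argue by direct structural analysis. Under the hypothesis that (\ref{evenz}) is the only multiplicative relation in $H_{\Bbbk}$ in degrees $\le|x|$, every generator $u$ of $V$ in these degrees is one of: a multiplicative generator in $\mathcal{V}^{0,*}$; a syzygy $y_n$ from (\ref{m-even}) attached to $\lambda_y y^m=0$; a $\smile_1$-generator in $\mathcal{E}^{<0,*}$; or $y_0\cup_2 y_0\in\mathcal{T}^{-2,*}$ (the $\cup_2$-products $a\cup_2 b$ with $a\ne b$ having been killed in $R_\tau H$, and $x_0\cup_2 x_0=0$ since $|x_0|$ is odd). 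Using Hirsch formulas (\ref{hirsch1})--(\ref{hirsch2}), the formulas (\ref{m-even}), the identity $d(y_0\cup_2 y_0)=y_0\smile_1 y_0$, and the vanishings $h^{tr}(\mathcal{E})=0$, $h^{tr}(y_{2k})\in\mathcal{D}$ and $h^{tr}(a\cup_2 b)=0$ for $a\ne b$ from (\ref{cup2}), one verifies that the projection $\pi_V(d_hu)\in V$ (the $V$-linear part of $d_hu$) always has zero coefficient on $x_0$ in the basis of $V$. Suppose for contradiction that $d_hu=x_0+c+\lambda v$ with $u,v\in V$, $c\in\mathcal{D}$ and $\lambda$ non-invertible. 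Applying $\pi_V$ and reading off the coefficient of $x_0$ yields $0=1+\lambda\alpha$ where $\alpha$ is the $x_0$-coefficient of $v$, which forces $\lambda$ to be invertible in $\Bbbk$ --- a contradiction. Hence $x_0$ is not $\lambda$-homologous to zero.

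For the second assertion, naturality of the cohomology suspension with respect to the Bockstein homomorphism, $\sigma\circ\beta_\lambda=\pm\beta_\lambda\circ\sigma$, closes the argument: if $b\in\operatorname{Ker}\sigma$ then $\sigma(x)=\sigma(\beta_\lambda b)=\pm\beta_\lambda\sigma(b)=0$, contradicting the first assertion. The case $x\in\mathcal{H}^{od}_{\Bbbk,1}$ with $x_0\in K_\mu\subset\mathcal{V}^{-1,*}$ is handled by running the same argument one resolution level deeper (i.e., testing the $V^{-1,*}$-coefficient of $\pi_V(d_hu)$ for $u\in V^{\le -2,*}$), where the absence of syzygies involving $x$ plays the same role.

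The main obstacle is the case-by-case verification of the structural claim, in particular checking that the perturbation $h$ introduces no isolated $x_0$-contribution to $\pi_V(d_hu)$ via higher-order syzygies of the $y_n$'s or via $\cup_2$-compositions. The identities $h^{tr}(\mathcal{E})=0$, (\ref{cup2}) and $h^{tr}(y_{2k})\in\mathcal{D}$ are precisely the ingredients of the filtered Hirsch model that rule this out under the single-relation hypothesis.
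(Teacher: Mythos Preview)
For the first assertion your strategy matches the paper's --- show that $x_0$ is not $\lambda$-homologous to zero by ruling out every candidate $u\in V$ --- but the paper executes it in one line via parity: any such $u$ must have even total degree (since $|x|$ is odd), so among the syzygies attached to the single relation only the $y_{2k}$'s qualify, and for these $h^{tr}(y_{2k})\in\mathcal D$; the $y_{2k+1}$'s are dismissed because $|h^{tr}y_{2k+1}|$ is even. Your case enumeration reaches the same conclusion but is heavier, and note that your listed ingredients do not say what controls $h^{tr}(y_{2k+1})$ --- that is exactly the parity observation the paper uses.

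For the second assertion you take a genuinely different route. The paper argues directly in the model: $|b_0|$ is even, so the potential killers are the odd-dimensional $y_{2k+1}$'s, and an appeal to (\ref{m-even}) together with $d_h^2=0$ rules out $b_0$ as a target of $h$ on any $y_{2k+1}$. Your use of the naturality $\sigma\circ\beta_\lambda=\pm\beta_\lambda\circ\sigma$ instead reduces the claim for $b$ to the already-proved claim for $x$, bypassing any further inspection of the resolution. This is cleaner and more conceptual; the paper's argument stays entirely inside the filtered Hirsch machinery and makes the obstruction explicit. One caveat worth recording: in this paper $\beta_\lambda$ is a notation for the integral relation $db_0=\lambda x_0$ in $(RH,d)$ rather than a standard cohomology operation, so the commutation you invoke should really be read at that chain level (it holds there, since $h(b_0)=0$ for $b_0\in V^{-1,*}$ and hence the relation $db_0=\lambda x_0$ survives to $(\bar V_\Bbbk,\bar d_h)$).
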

\begin{proof}
Theoretically there may be $y_n\in V$ given by (\ref{m-even}) serving as a source for $h$ to kill $x$ or  $ b.$
 Since $|x|$  is odd and $|h^{tr}y_{2k+1}|$ is even  (and $h^{tr}(y_{2k})\in {\mathcal D}$), $x_0$ is not $\lambda$-homologous to zero.
 When there is $b\in H_{\Bbbk,1}$ with  $\beta_{\lambda}(b)=x,$   the element
 $b_0\in K_{\mu}$ is not $\lambda$-homologous to zero since (\ref{m-even}) and $d^2_h=0$ prevent $b_0$ to be in the target of $h$ evaluated on any $y_{2k+1}.$
\end{proof}

\begin{proposition}\label{sequences}
Let $x\in {\mathcal H}^{od}_{\Bbbk}$  and let  $P(y_1,...,y_q)=0$
 be a relation  given by $(\ref{relation}).$

  $(i)$ If $|x|\leq |P|$
or (\ref{relation}) is specified as (\ref{evenz})  to be
 a  single relation with   $|P|<|x|,$
      then
 the
sequence  $\mathbf{x}=\left\{x(i)\right\}_{i\geq 0}$ given by (\ref{even}) for $w=x_0$ satisfies (\ref{p1})--(\ref{p2})  in $RH\otimes\bar V.$

$(ii)$ Let $\varpi\in V$ be associated with the relation $P(y_1,...,y_q)=0.$

$(ii_1)$ If $P(y_1,...y_q)$ is  of the smallest degree with  $|x|<|P|,$
then the
         sequence  $\mathbf{x}=\left\{x(i)\right\}_{i\geq 0}$ given by (\ref{even}) for  $w=\varpi$  satisfies (\ref{p1})--(\ref{p2})
         in $RH\otimes\bar V.$

$(ii_2)$ Let  $P'(y'_1,...,y'_{q'})=0$ and $P(y_1,...y_q)=0$ be two relations of the smallest degree   with  $|x|<|P'|\leq |P|$ where
the first relation is
 given by (\ref{specific2}).
       Then the
         sequence  $\mathbf{x}=\left\{x(i)\right\}_{i\geq 0}$ given by (\ref{even}) for  $w=\varpi$  satisfies (\ref{p1})--(\ref{p2})
         in $RH\otimes\bar V.$

$(iii)$
 Let    $P(y_1,...,y_q)\neq  \lambda_x x^2$
    for $\mu$   even and $\mathcal{P}_1(x)=0,$ $\lambda_x\in{\mathbb Z}.$  Then
the pair of sequences  given by  items  (i) and (ii)
  is admissible.
\end{proposition}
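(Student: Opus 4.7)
The plan is to verify conditions (\ref{p1})--(\ref{p2}) directly by applying Proposition \ref{nonweak2} to the elements $x'(n)$ underlying (\ref{even}), while controlling the possible coboundary witnesses by means of the minimality of the Hirsch resolution and the degree hypotheses that single out the relation $P(y_1,\dots,y_q)=0$.

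For part $(i)$ with $w=x_0$, I compute $d_h x_0^{\smile_1(n+1)}$ using the Hirsch formulas (\ref{hirsch1})--(\ref{hirsch2}) and observe that every summand lies in $\widetilde{\mathcal{D}}$ (or $\widetilde{\mathcal{D}}_\Bbbk$), since the $\smile_1$-product on generators is mod-$\mu$ commutative and the $\cup_2$-terms have already been quotiented out in passing to $R_\tau H$. To verify (\ref{p1}) I decompose $x'(n)=x_0\cdot x'(n-1)+(\text{symmetrisation})$ and invoke Proposition \ref{nonweak2}: the hypothesis $|x|\leq|P|$ forbids any $w'\in V$ whose differential contains the leading summand $x_0\cdot x'(n-1)$ outside $\mathcal{E}$, since such a $w'$ would originate from a relation in $H_\Bbbk$ of degree strictly less than $|P|$. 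In the single-relation subcase (\ref{evenz}) the same conclusion follows from Proposition \ref{norelation}, because the only potential candidates arising from (\ref{m-even}) have $h^{tr}$-values of the wrong parity or lying in $\mathcal{D}$. Condition (\ref{p2}) then follows from the same degree count applied to the putative witness $a'$.

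For part $(ii)$, the construction of $\varpi$ in Subsection \ref{w-associated} is arranged so that $d_h\varpi\in\widetilde{\mathcal{D}}_\Bbbk$, and the correction $\gamma_w$ guarantees $d_h x'(n)\in\widetilde{\mathcal{D}}_\Bbbk$ for all $n$. In subcase $(ii_1)$ the element $\varpi$ is pinned down by the relation $P$ of smallest degree $>|x|$, and minimality together with this smallest-degree choice rules out any $w'\in V$ cobounding $\chi_{_1}(x'(n))$. Subcase $(ii_2)$ bypasses the obstruction coming from ${\mathfrak c}$ of infinite order by invoking the second relation $P'$ of (\ref{specific2}), so that a suitable $\bar c$ furnishes $\varpi$ via (\ref{ww}) without ambiguity. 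In every subcase, condition (\ref{p2}) holds because any candidate witness $a'$ would have to come from one filtration stratum deeper, which minimality excludes modulo $\mu$.

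For part $(iii)$, admissibility requires that $\alpha_1 x(i)+\alpha_2 y'(j)$ still obeys (\ref{p1})--(\ref{p2}) whenever $|x(i)|=|y'(j)|$. The unique way this can fail is that the leading $\smile_1$-term of the sequence from $(i)$ coincides with a term of the sequence from $(ii)$, producing a mod-$\mu$ coboundary whose witness is not a cocycle; by Example \ref{example} this coincidence happens exactly when $\mu$ is even, $\mathcal{P}_1(x)=0$, and $P=\lambda_x x^2$, which is precisely the case excluded by hypothesis. I expect the main obstacle to be this admissibility verification together with the Bockstein book-keeping via (\ref{krainesf})--(\ref{comassey}) hidden inside the definition (\ref{ww}) of $\varpi$: one must check that no low-degree algebraic coincidence spoils the linear independence of $[t_\Bbbk x_\mu(i)]$ and $[t_\Bbbk y_\mu(j)]$ in $H(C_\Bbbk)$.
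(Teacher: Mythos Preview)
Your outline misses the central case distinction in part $(i)$ and, in consequence, the one place where a genuine new argument is required. The paper splits $(i)$ according to whether $x\in H_{\Bbbk,0}$ or $x\in H_{\Bbbk,1}$, and in the first subcase further according to whether there exists $b\in H_{\Bbbk,1}$ with $\beta_\lambda(b)=x$. Your degree claim that ``$|x|\le|P|$ forbids any $w'\in V$ whose differential contains the leading summand $x_0\cdot x'(n-1)$'' fails exactly when such a $b$ exists: the relations (\ref{oneone}) exhibit elements $b_n\in V$ whose differential contains $\varepsilon_n\lambda\,x_0^{\smile_1(n+1)}$ plus products $b_i\,x_0^{\smile_1 j}$, so $x_0^{\smile_1 n}$ \emph{can} be weakly homologous to zero (the paper notes this happens precisely when $\lambda=\mu=p=n+1$). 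The paper then builds the auxiliary element $a_n=\sum\varepsilon_n\binom{n+1}{i+1}b_i\,x_0^{\smile_1 j}+hb_n$ and applies Proposition~\ref{nonweak2} to $a_n$, obtaining $[\chi_{_1}(a_n)]\neq 0$ and hence $[x(n)]\neq 0$ via $[\chi_{_1}(a_n)]=-\varepsilon_n\lambda[\chi_{_1}(x_0^{\smile_1(n+1)})]$. This Bockstein-driven detour is absent from your proposal.

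Two smaller points. In the generic subcase $(i_{a1})$ the paper applies Proposition~\ref{nonweak} (not \ref{nonweak2}) to $v=x_0^{\smile_1 n}$ using the binomial expansion (\ref{binomial}); the decomposition you use, $x'(n)=x_0\cdot x'(n-1)+\cdots$, is not the one that feeds into that proposition. In part $(ii)$ you assert $d_h\varpi\in\widetilde{\mathcal D}_\Bbbk$, but the paper explicitly treats the alternative $d_h\varpi\notin\widetilde{\mathcal D}_\Bbbk$ as well, where the argument hinges on whether the factors $z_j^{\smile_1^{(p^{\nu_j+1}-1)}}$ are $\lambda$-homologous to zero; in $(ii_2)$ the key observation is that $\varpi$ can only be killed via elements $u_i$ arising from the obstructed relation (\ref{specific2}), and for these $du_i\notin\widetilde{\mathcal D}_\Bbbk$. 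Your reading of $(ii_2)$ also reverses the roles of $P$ and $P'$: it is $P'$ that is the obstructed relation (\ref{specific2}), and $\varpi$ is associated with the second relation $P$.
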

\begin{proof}
(i)  First note that when $|x|\leq |P|,$ $x_0$ is not $\lambda$-homologous to zero by the degree reason, while apply Proposition \ref{norelation} for  $|P|<|x|;$ the same argument implies that $b_0$ is also not $\lambda$-homologous to zero when $\beta_{\lambda}(b)=x.$
Consider two subcases.

$(i_a)$ Let $x\in H_{\Bbbk,0}.$ In fact we have   to verify only (\ref{p1}).
 $(i_{a1})$ Assume there is no $b$ with $\beta_{\lambda}(b)=x.$
 Observe that for an odd dimensional  $a\in RH$ and $n\geq 2,$
        $da^{\smile_{\mathbf{1}}n}$    contains a summand component of the form
\begin{equation}\label{binomial}
-\sum_{k+\ell=n}\binom{n}{k} a^{\smile_{1}k} a^{\smile_{1}\ell},\,\,\,\,k,\ell\geq1\ \ \ (\text{with}\ \  a^{\smile_1 1}=a).
\end{equation}
By setting  $v={x_{0}}^{\smile_{1}n}$ and $v_1v_2=-\binom{n}{k} {x_0}^{\smile_{1}k} {x_0}^{\smile_{1}\ell},$ some $k,\ell,$ we see that the hypothesis of Proposition
\ref{nonweak} is satisfied
and hence  ${x_{0}}^{\smile_{1}n}$ is not weakly homologous to zero. Consequently,  $[x(n)]\neq0$ as desired.
$(i_{a2})$
 Assume there is $b$ with $\beta_{\lambda}(b)=x.$ Let
  $p$ be a prime that divides $\mu.$
 We have a sequence of relations in $(RH,d)$
\begin{equation}\label{oneone}
 d b_{n}=
 \sum_{\substack{i+j=n\\i\geq0;j\geq 1}}\varepsilon_n \binom{n+1}{i+1} b_{i}{x_{0}}^{\smile_{1}j}+ \varepsilon_n \lambda\, {x_{0}}^{\smile_{1}(n+1)},\ \ \ b_n\in V,\,\,n\geq1,
 \end{equation}
where $\varepsilon_{p^k-1}=\frac{1}{p}$ and  $\varepsilon_n=1$ for  $n+1\neq p^k,k\geq 1. $
In view of (\ref{oneone}) and Proposition \ref{nonweak} we remark that  ${x_{0}}^{\smile_{1}n}$ may be weak homologous  to zero only  for $\lambda =\mu =p=n+1.$ In any case consider the element $a_n\in (RH,d_h)$ given by
\[   a_n=\sum_{\substack{i+j=n\\i\geq0;j\geq 1}}\varepsilon_n \binom{n+1}{i+1} b_{i}{x_{0}}^{\smile_{1}j}+hb_n. \]
Since $b_0$ and $x_0$ are not $\lambda$-homologous to zero,  so is $b_n$ for all $n.$
Obviously $d_ha_n\in \widetilde{\mathcal D}$ and by setting $a=a_n$ and    $v_1\cdot v_2=\varepsilon_n (n+1) \,b_{n-1}\cdot x_0,$ the component of $a$ for $(i,j)=(n-1,1),$ the hypotheses of Proposition \ref{nonweak2} are satisfied.
Therefore, we get
 $[\chi_{_{1}}(a)]\neq 0$ in $H(RH\otimes \bar V,d_{\omega}).$
Obviously  $[\chi_{_{1}}(a)]= -\varepsilon_n\lambda [\chi_{_{1}}({x_{0}}^{\smile_{1}(n+1)})].$ Thus  $[x(n)]\neq 0$  as desired.

$(i_{b})$ Let $x\in H_{\Bbbk,1}.$ Then we have to verify only (\ref{p2}).
Since $x_0\in K_{\mu}$ is not $\lambda$-homologous to zero,
the proof easily follows from the analysis of the component given by
(\ref{binomial}) for $a=x_0$ in
 $d{x_{0}}^{\smile_{1}n}.$

(ii)
When  $\varpi$    is  not $\lambda$-homologous to zero and either $d_h \varpi \in \widetilde{\mathcal D}_{\Bbbk}$ or
$d_h \varpi \notin \widetilde{\mathcal D}_{\Bbbk}$ but
 ${z_j}^{{\smallsmile_1}^{(   p^{\nu_j+1}-1)}}$    is also not $\lambda$-homologous to zero for all $j$ ($z_j$ is a variable  in $d_ h \varpi$),
 the proof is  analogous to that of  subcase $(i_{a1}) $ or $ (i_{b})$ of item (i).
Otherwise, we observe that  $\varpi$    is again not $\lambda$-homologous to zero in $(ii_1),$  while
$\varpi$ may be   $\lambda$-homologous to zero  in  $(ii_2)$
only by evaluating $h$ on certain elements  $u_i\in V$ arising from the relation given by (\ref{specific2}) the first of which is
$u=u_0$  as given by (\ref{basic});
since $d u \notin \widetilde{\mathcal D}_{\Bbbk},$ neither $du_i$ is in $\widetilde{\mathcal D}_{\Bbbk}.$
And then in the both subcases a straightforward check
 completes the proof.

(iii) The proof is analogous to that of items $(i)$--$(ii).$  The restriction on the relation for $\mu$ even is in fact explained by Example \ref{example}.

 \end{proof}

 \begin{remark} Let $A_{\Bbbk}=C^*(X;{\mathbb Z}_p),p>2.$   The case of $x\in H_{{\mathbb Z}_p}$ with $\beta(b)=x$  fundamentally distinguishes the (based) loop  and free loop spaces on $X$ with respect to the existence of infinite sequence arising from $x$ in
 $H^*(\Omega X;{\mathbb Z}_{p})$ and $H^*(\Lambda X;{\mathbb Z}_{p})$ respectively.
  Namely,
  let both
    $\mathcal{P}_{1}(x)$ and  $\langle x \rangle^p$  be  multiplicative generators of  $H^*(X; {\mathbb Z}_p)$
 such that
 $\mathcal{P}_{1}(x)\in\langle b,x,...,x \rangle,$ the $p^{th}$-order Massey product.
 Then by the hypotheses of Proposition \ref{sequences}
 the sequence arising from $x$ in
 $H^*(\Omega X;{\mathbb Z}_{p})$ may terminate at the $p^{th}$-component $($see \cite{saneFiltered} for $p=3)$, while  is always infinite in $H^*(\Lambda X;{\mathbb Z}_{p}).$
 \end{remark}

\section{Proof of Theorem \ref{varsigma}} \label{theorem}

The proof of the theorem relies on the two basic propositions below in which the
condition that $\tilde{H}_{\Bbbk}$ requires at least two algebra generators is treated in two
specific cases. Note also that the essence of the method used in the proof of the following proposition  is in fact kept   for $\mu$ to be a prime.

\begin{proposition}
\label{one} Let $H_{\Bbbk}$ be a finitely generated $\Bbbk$-module with $\mu\geq 2.$ If
$\tilde{H}_{\Bbbk}$ requires at least two algebra generators and $\tilde {H}_{\mathbb{Q}}$
is either trivial or has a single algebra generator, then
 there are two sequences
$\mathbf{x}_{\mu}=\left\{x_{\mu}(i) \right\}_{i\geq0}$ and
$\mathbf{y}_{\mu}=\left\{y_{\mu}(j)\right\}_{j\geq 0}$ of $\mod \mu$ $d_{\omega}$-cocycles in
$(RH\otimes \bar V,d_{\omega})$ whose degrees form arithmetic progressions and
the product classes $\left\{[t_{\Bbbk}x_{\mu}(i)]\cdot [t_{\Bbbk}y_{\mu}(j)]\right\} _{i,j\geq0}$ are linearly
independent in $H(RH\otimes \bar V _{\Bbbk},d_{\omega}).$
\end{proposition}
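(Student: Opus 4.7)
The plan is to construct the sequences by feeding appropriate ``seeds'' through formula~(\ref{even}) of Section~\ref{sequencesinhoh}. Under the hypotheses, $H_\Bbbk$ has at most one rational generator while requiring $\geq 2$ generators over $\Bbbk$, so all but possibly one multiplicative generator of $H_\Bbbk$ is torsion and, by finite generation of $H_\Bbbk$, there exists a smallest-degree multiplicative relation $P(y_1,\ldots,y_q)=0$ of the form~(\ref{relation}). A short case analysis on the parities of the degrees of the multiplicative generators produces an odd-dimensional $x\in \mathcal H^{od}_\Bbbk$, and the element $\varpi\in V$ associated with the chosen relation exists by Section~\ref{w-associated}, since the hypothesis on $\tilde H_{\mathbb Q}$ is precisely what unblocks the obstruction described by~(\ref{specific2}).

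Setting $w=x_0$ in~(\ref{even}) yields the first sequence $\mathbf x=\{x(i)\}_{i\geq 0}$, and $w=\varpi$ yields a second sequence $\mathbf y'=\{y'(j)\}_{j\geq 0}$. Both choices of $w$ satisfy the decomposition~(\ref{split}), so each element is a $\!\!\mod\mu$ $d_\omega$-cocycle and the degrees form arithmetic progressions in the index. Proposition~\ref{sequences}(i)--(ii) certifies that each sequence satisfies conditions~(\ref{p1})--(\ref{p2}), while~(iii) yields admissibility of the pair $(\mathbf x,\mathbf y')$ outside the exceptional configuration $P=\lambda_x x^2$ with $\mu$ even and $\mathcal P_1(x)=0$. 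In that exception, Example~\ref{example} prescribes the replacement $y'(1)=x_1$ with $dx_1=x_0^2$, after which admissibility is restored. From the admissible pair the associated sequences $\mathbf x_\mu$ and $\mathbf y_\mu$ are produced by the recipe of Section~\ref{twosequences}, and by construction each class $[t_\Bbbk x_\mu(i)]$ and $[t_\Bbbk y_\mu(j)]$ is nonzero in $H(RH\otimes \bar V_\Bbbk,d_\omega)$.

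The main obstacle is establishing linear independence of the product classes $\{[t_\Bbbk x_\mu(i)]\cdot[t_\Bbbk y_\mu(j)]\}_{i,j\geq 0}$. My strategy is to track the ``principal term'' of each chain-level product under the explicit product formula of the preceding subsection: since both factors lie in $1\otimes \bar V$ up to $d_\omega$-boundaries, the product collapses, modulo lower-bidegree corrections, to the $\chi_{_1}$-image of a $\smile_1$-monomial of the shape $x_0^{\smile_1(i+1)}\smile_1 \varpi^{\smile_1(j+1)}$, together with the $\!\!\mod\mu$ adjustments dictated by the associated-sequence construction. These principal terms corresponding to distinct $(i,j)$ occupy distinct bidegrees of the bigraded algebra $RH\otimes \bar V$, so independence reduces to nonvanishing of a single class per bidegree. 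Nonvanishing follows from Proposition~\ref{nonweak2}: the required factorization witness $v_1 v_2$ is read off from the binomial expansion~(\ref{binomial}) of $d(a^{\smile_1 n})$ as in the proof of Proposition~\ref{sequences}, and none of the $\smile_1$-factors is $\lambda$-homologous to zero by Proposition~\ref{norelation} combined with the Kraines identities~(\ref{krainesf})--(\ref{comassey}). The delicate technical point will be controlling the $\!\!\mod \mu$ corrections simultaneously with the Hirsch perturbation $h$ governed by~(\ref{hmsymmetric}); here the integral filtered Hirsch model~(\ref{fhmodel}) and the normalization~(\ref{cup2}) supply the book-keeping needed to rule out unwanted cancellations.
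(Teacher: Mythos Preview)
Your overall architecture matches the paper's: build $\mathbf x$ from an odd-dimensional generator $x\in\mathcal H^{od}_\Bbbk$ via~(\ref{even}), build a second sequence from a seed satisfying~(\ref{split}), invoke Proposition~\ref{sequences} for~(\ref{p1})--(\ref{p2}) and admissibility, pass to the associated $\mu$-sequences, and read off linear independence from the explicit product. The paper's final step is in fact briefer than yours---it simply says the explicit product formula ``allows us to ensure immediately'' the independence---so your principal-term sketch is extra detail, not a deficiency.

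There are, however, two genuine gaps in your construction of the second sequence. First, you go straight to $w=\varpi$ for a smallest-degree relation, whereas the paper runs a prior case split: if there is a second odd-dimensional generator $y\notin\operatorname{Ker}\sigma$ linearly independent of every $\mathcal P_1^{(m)}(x)$, take $w=y_0$; if there is one linearly dependent on some $\mathcal P_1^{(m)}(x)$ with $\mathcal P_1^{(m-1)}(x)\notin\operatorname{Ker}\sigma$, take $w=x_{p-1}$ as in~(\ref{mzero}); only failing both does one resort to $\varpi$. You also overstate the availability of $\varpi$: the obstruction~(\ref{specific2}) is \emph{not} always unblocked by the hypothesis on $\tilde H_{\mathbb Q}$ for the very first relation; the paper's remedy is that, since the rational generator $\mathfrak z$ is unique, the obstruction cannot recur, so one passes to the \emph{next} relation, for which $\varpi$ does exist.

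Second, your treatment of the exception $P=\lambda_x x^2$ with $\mu$ even and $\mathcal P_1(x)=0$ is incorrect. Example~\ref{example} does not give a repair; it explains \emph{why} admissibility fails. In that case $\varpi$ is (a scalar multiple of) the element $x_1$ with $dx_1=x_0^2$, so $y'(0)=1\otimes\bar\varpi$ coincides, up to scalar, with $x_\mu(1)=1\otimes\bar x_1$ from Example~\ref{example}: the two sequences are not independent from the outset, and replacing a single term cannot cure this. The paper's fix is to \emph{exclude} the relation $\lambda_x x^2=0$ and take instead the next smallest-degree relation (which exists since $\tilde H_\Bbbk$ requires at least two generators) to produce $\varpi$.
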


\begin{proof}

First,  we exhibit two sequences
$\mathbf{x}=\left\{x(i)\right\}_{i\geq 0}$ and $\mathbf{y}'=\left\{y'(i)\right\}_{i\geq
0}$ in $(RH\otimes \bar V,d_{\omega})$ consisting of $\!\!\!\mod \mu$ $d_{\omega}$-cocycles  and
satisfying the hypotheses of Proposition \ref{sequences}.
 In the case  $\tilde {H}_{\mathbb{Q}}\neq 0,$ let
${\mathfrak z}$ be a  single multiplicative generator of infinite order of $H$ and $z= t^*_{\Bbbk}(\mathfrak z)\in H_{\Bbbk,0}.$

By the hypotheses of the proposition  there is an odd dimensional element  $x\in \mathcal{H}_{\Bbbk}$
and choose
   $x$ to be of the smallest degree.
 Define $\mathbf{x}=\{x(i)\}_{i\geq0}$
   by (\ref{even}) for $w=x_0.$

   To find the second sequence, note that
   there must be  an even dimensional element  $y\in \mathcal{H}_{\Bbbk}$  and hence a relation $y^{\hbar_y+1}=0 $ in $H_{\Bbbk}$
   unless maybe $\mu$ is even and $x=z$ with ${\mathcal P}_1(x)\neq 0$ in which case we have  a relation $x^{\hbar_x+1}=0$ instead.
 First, observe the following: if there is $y\in {\mathcal H}^{od}_{\Bbbk}$ with  $y\notin \operatorname{Ker}\sigma$ and  linearly independent with ${\mathcal P}_1^{(m)}(x)$  for some $m,$
define    $\mathbf{y}'=\{y'(j)\}_{j\geq0}$
   by (\ref{even}) for $w=y_0;$
If there is $y\in {\mathcal H}^{od}_{\Bbbk,0}$ linearly dependent on ${\mathcal P}_1^{(m)}(x),$  while  ${\mathcal P}_1^{(m-1)}(x)\notin \operatorname{Ker}\sigma ,$
define     $\mathbf{y}'=\{y'(j)\}_{j\geq0}$
   by (\ref{even}) for $w$  given by  (\ref{mzero}) in which $x$ is replaced by  ${\mathcal P}_1^{(m-1)}(x).$
Otherwise, consider a relation $P(y_1,...,y_q)=0$  of the smallest degree in $H_{\Bbbk}$
     unless
    $P(y_1,...,y_q)= \lambda_x x^2,\lambda_x\in{\mathbb Z},$
    whenever  $\mu$ is odd or $\mu$ is even and $\mathcal{P}_1(x)=0.$
    When the relation admits to associate $\varpi$ as in subsection \ref{w-associated}, define    $\mathbf{y}'=\{y'(j)\}_{j\geq0}$
   by (\ref{even}) for  $w=\varpi.$ When the definition of $\varpi$ is obstructed, consider the next relation in $H_{\Bbbk}.$ This time
the second relation admits
to associate $\varpi$ (since the above ${\mathfrak z}$ is unique)
  and hence
  the second sequence  $\mathbf{y}'=\{y'(j)\}_{j\geq0}$
   is defined.

Finally,  the pair of the sequences
 $(\mathbf{x},\mathbf{y}')$ found above is admissible:
when the existence of the pair involves  relation(s) in $H_{\Bbbk}$ (if not, the claim is rather obvious), it
satisfies  the hypotheses of
 Proposition \ref{sequences}. Obtain
 the associated sequences
  $\mathbf{x}_{\mu}=\left\{x_{\mu}(i) \right\}_{i\geq0}$ and
$\mathbf{y}_{\mu}=\left\{y_{\mu}(j)\right\}_{j\geq 0}$ as in subsection
\ref{twosequences}. The explicit product on $RH\otimes \bar V$ allows us to ensure
immediately that the product classes $\left\{[t_{\Bbbk}x_{\mu}(i)]\cdot [t_{\Bbbk}y_{\mu}(j)]\right\}
_{i,j\geq0}$ are linearly independent in $H(RH\otimes \bar V _{\Bbbk},d_{\omega}).$
\end{proof}
Given a cochain complex $(C^{\ast},d)$ over $\mathbb{Q},$ let
$S_{C}(T)=\sum_{n\geq0}(\dim_{\mathbb{Q}}C^{n})T^{n}$ and $S_{H(C)}
(T)=\sum_{n\geq0}(\dim_{\mathbb{Q}}H^{n}(C))T^{n}$ be the Poincar\'{e} series. Recall
the convention:
 $\sum_{n\geq0}a_{n}T^{n}\leq\sum_{n\geq0}b_{n}T^{n}$ if and only if
$a_{n}\leq b_{n}.$ The following proposition can be thought of as a modification of
Proposition 3 in \cite{Sul-Vigue} for the non-commutative case.

\begin{proposition}
\label{quotient} Let $(B^*,d_{B})$ be a dga over ${\mathbb Q}$ and let
  $y\in B^k, k\geq 2,$ be
an element such that $d_{B}y=0$ and  $yb\neq 0$ for all $b\in B.$    Then
\begin{equation}
\label{inequality2} S_{H(B/yB)}(T)\leq  (1+T^{k-1})S_{H(B)}(T).
\end{equation}
\end{proposition}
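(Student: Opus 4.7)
The plan is to form the short exact sequence of cochain complexes
\begin{equation*}
0 \longrightarrow yB \longrightarrow B \longrightarrow B/yB \longrightarrow 0
\end{equation*}
and read off the inequality from the associated long exact sequence in cohomology. First I would check that the left ideal $yB$ is a $d_B$-stable subcomplex of $B$: since $d_By=0$, the Leibniz rule gives $d_B(yb) = (-1)^k y\,d_Bb\in yB$. Consequently the quotient $B/yB$ inherits a differential and the displayed sequence is exact in the category of cochain complexes over $\mathbb{Q}$.

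Next, I would analyze the map $\phi\colon B\to yB$ sending $b\mapsto yb$. By hypothesis $yb\neq 0$ for every nonzero $b\in B$, so $\phi$ is injective; it is manifestly surjective onto $yB$, hence a bijection of graded $\mathbb{Q}$-vector spaces raising degree by $k$. The identity $d_B\phi(b)=(-1)^k\phi(d_Bb)$ then exhibits $\phi$ as a chain isomorphism up to an overall sign, and induces isomorphisms
\begin{equation*}
H^n(yB)\;\cong\;H^{n-k}(B)\qquad\text{for all }n.
\end{equation*}

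Finally, the fragment
\begin{equation*}
\cdots\to H^n(B)\to H^n(B/yB)\to H^{n+1}(yB)\to\cdots
\end{equation*}
of the long exact sequence gives
\begin{equation*}
\dim_{\mathbb{Q}}H^n(B/yB)\;\leq\;\dim_{\mathbb{Q}}H^n(B)+\dim_{\mathbb{Q}}H^{n+1}(yB)\;=\;\dim_{\mathbb{Q}}H^n(B)+\dim_{\mathbb{Q}}H^{n+1-k}(B).
\end{equation*}
Multiplying by $T^n$ and summing over $n$ yields the claimed bound $S_{H(B/yB)}(T)\leq(1+T^{k-1})S_{H(B)}(T)$.

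There is no serious obstacle beyond the sign bookkeeping; the argument uses only the left-ideal structure together with the injectivity of left multiplication by $y$, so the non-commutativity of $B$ plays no role. This is exactly what is needed in the application to $RH\otimes \bar V_{\mathbb{Q}}$ in Section \ref{theorem}, where $B$ is not assumed commutative and the original Sullivan--Vigu\'e-Poirrier argument must be adapted accordingly.
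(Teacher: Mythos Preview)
Your proposal is correct and follows essentially the same route as the paper: the paper also forms the short exact sequence $0\to s^kB\xrightarrow{\iota}B\to B/yB\to 0$, where $s^kB$ is identified with $yB$ via the injective left-multiplication map $b\mapsto yb$, and then invokes the long exact sequence argument (deferred there to \cite[Proposition 7]{saneBetti}) that you have spelled out explicitly. Your write-up simply unpacks the reference; there is no substantive difference in method.
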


\begin{proof}
 We have an inclusion of cochain complexes
 $s^k B\overset{\iota}{\longrightarrow}B$
   induced  by  the map $B\overset{y\cdot}{\longrightarrow} B,$  $b\rightarrow yb,$
and, consequently,
 the short
exact sequence of cochain complexes
\[
0\longrightarrow s^kB\overset{\iota}{\longrightarrow}B\longrightarrow
B/yB\longrightarrow 0.
\]
Then the proof of the proposition is entirely  analogous to that of \cite[Proposition 7]{saneBetti}.
\end{proof}

\begin{proposition}
\label{rationalone}  Let $H_{_{\mathbb{Q}}}$ be a finitely generated $\mathbb{Q}$-module.
If $\tilde{H}_{_{\mathbb{Q}}}$ requires at least two algebra generators, then the sequence $\left\{\varsigma_{i}(A)\right\} $ grows unbounded.
\end{proposition}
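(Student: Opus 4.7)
The plan is to parallel the proof of Proposition \ref{one} with the considerable simplification afforded by $\mu=0$: the associated sequences $\mathbf{x}_\mu$ of subsection \ref{twosequences} collapse to the source sequences themselves, so it suffices to produce two honest $d_\omega$-cocycle sequences in $(RH\otimes\bar V,d_\omega)_\mathbb{Q}$ whose product classes span a submodule as large as $\mathbb{Q}[s,t]$. Since $H_\mathbb{Q}$ is finite-dimensional and graded-commutative over $\mathbb{Q}$, every even-dimensional algebra generator is necessarily nilpotent; combined with the hypothesis that $\tilde H_\mathbb{Q}$ has at least two algebra generators, this splits into three subcases: (a) at least two odd-dimensional generators exist; (b) exactly one odd-dimensional generator together with at least one (nilpotent) even-dimensional generator; (c) all generators are even-dimensional and nilpotent, at least two of them.

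In each subcase I would select two odd-dimensional elements $w_1,w_2\in V$ and define $\mathbf{x}=\{x(n)\}$ and $\mathbf{y}=\{y(n)\}$ via formula (\ref{even}) with these weights. In subcase (a), take $w_i=(x_i)_0$ for two odd-dimensional generators $x_1,x_2\in\mathcal H_\mathbb{Q}$ and invoke Proposition \ref{sequences}(i). In (b), take $w_1=x_0$ for the odd generator $x$ and $w_2=\varpi$ associated with the relation $y^{\hbar_y+1}=0$ via subsection \ref{w-associated}(i), then apply Proposition \ref{sequences}(ii_1). In (c), take $w_i=\varpi_i$ arising from the two relations $y_i^{\hbar_i+1}=0$ for distinct even generators. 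Admissibility of the resulting pair $(\mathbf{x},\mathbf{y})$ is easy to verify over $\mathbb{Q}$, since independent $\smile_1$-powers of odd elements cannot be forced to become coboundaries by degree considerations. The explicit product formula on $RH\otimes\bar V$ then exhibits the classes $\{[x(i)]\cdot[y(j)]\}_{i,j\geq 0}$ as linearly independent in $H^*(RH\otimes\bar V_\mathbb{Q},d_\omega)\approx HH_*(A_\mathbb{Q})$ (by Proposition \ref{barV}), and hence $\varsigma_i(A)=\dim_\mathbb{Q} HH_i(A_\mathbb{Q})$ grows unbounded.

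The main obstacle is subcase (c), where both sequences arise from relations and one must ensure that both $\varpi_1$ and $\varpi_2$ fail to be $\lambda$-homologous to zero and that admissibility survives the interaction of the two relations. As a quantitative fallback I would invoke Proposition \ref{quotient}: quotienting $B=(RH\otimes\bar V)_\mathbb{Q}$ by the ideal generated by a cocycle representing an even-degree algebra generator $y$ of minimal degree produces a dga modeling a cohomology with one fewer algebra generator, and the estimate $S_{H(B/yB)}(T)\leq(1+T^{|y|-1})S_{H(B)}(T)$ lets one propagate unbounded growth backwards by induction on the number of even-dimensional generators, in the spirit of \cite{Sul-Vigue}.
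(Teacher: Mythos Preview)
Your case split by the parity of generators is natural, but it does not line up with the paper's split, which is governed by the two relations of \emph{smallest degree} in $H_{\mathbb Q}$. The gap sits inside your case (b). When the unique odd generator $x$ participates in the minimal relation(s), the pure-power relation $y^{\hbar_y+1}=0$ that you pick need not be of smallest degree, so Proposition~\ref{sequences}$(ii_1)$ does not apply to the $\varpi$ you build from it. More seriously, the paper isolates a case~(iv) --- the two smallest relations $P_1=0$ and $P_2=0$ each contain an odd variable and that variable is the \emph{same} element $a$ --- which is precisely the situation where one cannot simply pair the sequence coming from $a_0$ with a relation-born $\varpi$ and read off admissibility. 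There the paper writes $P_1=ab$ with $b$ even, builds an explicit tower $a_1,b_1,a_2,b_2,a_3,b_3$ in $RH_{\mathbb Q}$ recording the induced relations, and takes $a_3-b_3$ as the second seed; only after passing to a quotient $D=B/(1\otimes\bar C)$ that kills finitely many classes $\bar z_j$ (those occurring in $b_0$ and in $h^3(a_3-b_3)$) do the two resulting sequences become genuine cocycles with linearly independent products in $H(D)$.

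Your fallback via Proposition~\ref{quotient} points in the right direction but is mis-stated. The paper does not quotient by an even algebra generator and induct on the number of generators, and nothing guarantees that $B/yB$ is again a Hochschild model with one fewer generator so that an inductive hypothesis would bite. Instead, Proposition~\ref{quotient} is applied successively for each of the finitely many $z_j$ above, giving $S_{H(D)}(T)\leq(\text{polynomial in }T)\cdot S_{H(B)}(T)$, which transfers the unbounded growth already exhibited in $H(D)$ back to $H(B)$. Your cases (a) and (c) roughly correspond to the paper's cases (ii) and (i) and are handled much as you describe, but the real content of the rational argument is the case-(iv) construction, which your outline does not supply.
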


\begin{proof}
Denote  $(B,d_B)=(RH\otimes \bar V_{_{\mathbb{Q}}},d_{\omega}).$
 We will define two sequences   $\mathbf{x}=\left\{x(i) \right\}_{i\geq0}$ and
$\mathbf{y}=\left\{y(j)\right\}_{j\geq 0}$ in $B$ consisting of $d_{B}$-cocycles in the four cases below.
Consider two relations  of the smallest degree in $H _{_{\mathbb{Q}}}$
 \[P_1(x_1,...,x_p)=0\ \  \text{and}  \ \  P_2(y_1,...,y_q)=0.\]
Suppose that

  (i)  All $x_i$ and $y_j$  are even dimensional. Obtain $u_1$  and $u_2$
    from (\ref{basic}) that correspond to the above relations, and
   define
   $\mathbf{x}=\left\{x(i) \right\}_{i\geq0}$ and
$\mathbf{y}=\left\{y(j)\right\}_{j\geq 0}$  by
(\ref{even}) for $w=u_1$ and $w=u_2$ respectively.

(ii) There are  odd dimensional elements $c_1$ and $c_2$ among $x_i$'s and $y_j$'s respectively.
Then define $\mathbf{x}=\left\{x(i) \right\}_{i\geq0}$ and
$\mathbf{y}=\left\{y(j)\right\}_{j\geq 0}$ by

\begin{equation}\label{odd1}
x(i)=1\otimes s^{-1}\left({ c_1}^{\smallsmile_1(i+1)}\right)
\end{equation}
and
 \begin{equation}\label{odd2}
y(j)=1\otimes s^{-1}\left( {c_2}^{\smallsmile_1(j+1)}\right)
\end{equation}
 respectively.

 (iii) There is a single odd dimensional  $x_i$ and  all $y_j$ are even dimensional. Define $\mathbf{x}=\left\{x(i) \right\}_{i\geq0}$
  by    (\ref{odd1}), while
define
$\mathbf{y}=\left\{y(j)\right\}_{j\geq 0}$
as in item (i).
When all $x_i$ are even dimensional and
 a single $y_j$ is odd dimensional,
define $\mathbf{x}=\left\{x(i) \right\}_{i\geq0}$
   as in item (i),  while define
$\mathbf{y}=\left\{y(j)\right\}_{j\geq 0}$ by (\ref{odd2}).

(iv)  There is a single  odd dimensional $x_i$ and a  single  odd dimensional $y_j$ equal to the same element $a\in H_{_{\mathbb Q}}.$
 Then obviously $P_1(x_1,...,x_p)$ admits a representation
 $P_1(x_1,...,x_p)=ab$ for a certain  even dimensional element $b\in H_{_{\mathbb{Q}}}.$ Consequently, the corresponding relation in
 $(RH_{_{\mathbb{Q}}},d)$ given by (\ref{basic})
  has   the form $du=a_0b_0$ for $a_0\in {\mathcal V}^{0,*}$ and $b_0\in R^0H^*.$  Denote $a_1=u$
 and $b_1=-a_1-a_0\smile_1 b_0$ to obtain
$db_1=-b_0a_0.$
 Furthermore, denoting   $b_2=-b_0\smile_1 a_1,$    there are the  induced relations in
$(RH_{_{\mathbb{Q}}},d):$
\[
\begin{array}
[c]{lllll}
da_2=a_0b_1-a_1a_0,   & &  da_3=a_0b_2-a_1a_1-a_2b_0,     \\
db_2=-b_0a_1-b_1b_0,   & &   db_3=  -b_0a_2-b_1b_1+b_2a_0, && a_3,b_3\in V_{_{\mathbb{Q}}}.
\end{array}
\]
 Thus we have
$h(a_3-b_3)=(h^{2}+h^{3})(a_3-b_3)$ with  $h^{2}(a_3-b_3)=-b_0\smile_1 h^2 a_2$ in
$(RH_{_{\mathbb{Q}}},d_h).$
Let $b_0=P'(z_1,...,z_r)$ and  $ h^3(a_3-b_3)=P''(z_{r+1},...,z_m)$  for some $z_j\in V^{0,*}_{_{\mathbb{Q}}},\, 1\leq j\leq m,\,
1\leq r<m.$
Define a complex $(D,d_{D})$ as
   $(D,d_{D})=(B/1\otimes \bar C,d_{D}),$
where $\bar C\subset \bar V_{_{\mathbb{Q}}}$ is a subcomplex (additively) generated by the
expressions     \[\{\bar z_j\,,\overline{z_j\smile_{1}v} \mid v\in V_{_{\mathbb{Q}}},\, 1\leq j\leq m\}.\]
Define $\bar x$ and $\bar y$  in $
\bar {V}_{_{\mathbb{Q}}}/\bar C$ as the projections of the elements $\bar a_0$ and  $\overline{a_3-b_3}$   under
the quotient map $\bar {V}_{_{\mathbb{Q}}}\rightarrow \bar{V}_{_{\mathbb{Q}}}/\bar C$ respectively.
Then $1\otimes \bar x$ and $1\otimes \bar y$ are cocycles in
$(D,d_D).$
Apply formulas (\ref{odd1})--(\ref{odd2}) for $(c_1,c_2)=(x,y)$ to obtain
the sequences $\mathbf{x}=\left\{x(i) \right\}_{i\geq0}$   and
$\mathbf{y}=\left\{y(j)\right\}_{j\geq 0}$ in $D.$ Then
the product classes $\left\{[x(i)]\cdot [y(j)]\right\}
_{i,j\geq0}$ are linearly independent in $H(D\,,d_{D}).$
  Finally,  apply Proposition \ref{quotient}
successively for $y\in \{z_1,...,z_m\}$ to obtain $S_{H(D)} (T)\leq S_{H(B)}(T),$ and then an
application of Proposition \ref{barV} completes the proof.
\end{proof}

\subsection{Proof of Theorem \ref{varsigma}}

In view of Proposition \ref{barV}, the proof reduces to the examination of the
$\Bbbk$-module $H(RH \otimes \bar{V}_{\Bbbk},d_{\omega}).$ If $\tilde{H}_{\Bbbk}$
has a single algebra generator $a,$ then the set $\left\{ \varsigma_{i}(A)\right\}  $ is
bounded since $\varsigma_{i}(A)=2$ (cf. \cite{Halp-Vigue}). If $\tilde{H}_{\Bbbk}$ requires at
least two algebra generators, then the proof follows from    Proposition \ref{barV} and Proposition \ref{one} for $\mu\geq 2,$    and
from Proposition \ref{rationalone}  for $\mu=0.$

\vspace{0.2in}

\vspace{5mm}

\end{document}